\documentclass[EJP,preprint]{ejpecp} % replace ECP by EJP if needed.
\SHORTTITLE{Normalized Laplacian spectrum in preferential attachment}

\TITLE{Limiting empirical spectral measure of the normalized Laplacian in preferential attachment graphs.}%\
\newcommand{\Cp}{\mathbb{C}_+}
\newcommand{\C}{\mathbb{C}}
\newcommand{\N}{\mathbb{N}}
\newcommand{\R}{\mathbb{R}}
\newcommand{\E}{\mathbb{E}}
\newcommand{\K}{\mathcal{E}}
\newcommand{\PP}{\mathbb{P}}
\newcommand{\Tr}{{\bf Tr}}

\newcommand{\dist }{\text{dist}}

\AUTHORS{%
  Malika~Kharouf\footnote{University of Technology of Troyes, France.
    \EMAIL{malika.kharouf@utt.fr}}}%\orcid{0000-0009-0005-3218-9594}
\KEYWORDS{Preferential attachment ; Barab\'asi-Albert Model ; Normalized Laplacian ; Limiting spectral measure ; Local weak convergence} % Separate items with ;

\AMSSUBJ{05C80} % Edit. Separate items with ;
\SUBMITTED{January 2, 2013} % Edit.
\ACCEPTED{December 13, 2014} % Edit.

\ABSTRACT{We study the empirical spectral distribution of the normalized Laplacian of linear preferential attachment graphs in the Barab\'asi-Albert regime with fixed out-degree. For the resulting sequence of random multigraphs, we prove that the empirical spectral distribution converges weakly in probability to a deterministic probability measure supported on the interval $[0,2]$. The limit is characterized via the local weak limit of preferential attachment graphs (the P\'olya--point graph): the limiting Stieltjes transform is given by the expected diagonal Green function at the root of the normalized Laplacian operator on this infinite random graph. The proof combines a resolvent approach with a uniform Neumann-series expansion for the normalized Laplacian, a random-walk representation in terms of return probabilities on decorated neighborhoods, a truncation and Doob martingale--Azuma--Hoeffding concentration argument along the PA filtration,
and an analytic continuation argument based on normal families.
}

\begin{document}

%%%%%%%%%%%%%%%%%%%%%%%%%%%%%%%%%%%%%%%%%%%%%%%%%%%%%%%%%%%%%%%%%%%
%%                                                               %%
%% No need for \maketitle.                                       %%
%%                                                               %%
%%%%%%%%%%%%%%%%%%%%%%%%%%%%%%%%%%%%%%%%%%%%%%%%%%%%%%%%%%%%%%%%%%%

%%%%%%%%%%%%%%%%%%%%%%%%%%%%%%%%%%%%%%%%%%%%%%%%%%%%%%%%%%%%%%%%%%%
%%                                                               %%
%% Please replace what follows by the body of your article       %%
%% (up to the bibliography):                                     %%
%%                                                               %%
%%%%%%%%%%%%%%%%%%%%%%%%%%%%%%%%%%%%%%%%%%%%%%%%%%%%%%%%%%%%%%%%%%%

\section{Introduction}

\subsection{Motivation: spectra of the normalized Laplacian in sparse, heterogeneous networks}

Spectral statistics of graph operators are a basic language for random networks: they encode
geometry and transport (mixing, diffusion, expansion) and provide tractable proxies for dynamical
processes. Among the canonical operators, the symmetric normalized Laplacian
\[
L := I - D^{-1/2} A D^{-1/2}
\]
is particularly well-suited to sparse and highly inhomogeneous graphs: its spectrum is always
contained in $[0,2]$, it is tightly linked to the simple random walk, and it governs classical
functional inequalities and mixing estimates; see, e.g., \cite{Chung1997,SpielmanNotes,LevinPeres2017}.
This paper establishes a deterministic limiting law for the empirical spectral distribution (ESD)
of $L$ in the Barab\'asi--Albert preferential attachment regime.

\subsection{Preferential attachment: scale-free growth and correlations}

Preferential attachment (PA) is a growth mechanism producing heavy-tailed degree
distributions: vertices arrive sequentially and connect to existing ones with probability proportional
to current degree \cite{BarabasiAlbert1999}. Rigorous analysis of the degree sequence and its
asymptotic power law goes back to \cite{BollobasRiordanSpencerTusnady2001}; see also the
monograph \cite{VanDerHofstad2017} and references therein. Numerous variants
(sublinear attachment, fitness, geometry, \dots) exhibit different universality classes and phase
transitions, see, e.g., \cite{Dereich2016Fitness,DereichMorters2009,Jordan2013}.

From the spectral viewpoint, PA graphs pose two intertwined difficulties. First, degrees are
unbounded and strongly heterogeneous (hubs coexist with typical vertices). Second, the growth
mechanism creates pronounced temporal correlations (degree--age dependence and nontrivial
dependencies between edges), placing PA outside independent-edge frameworks such as
expected-degree models. These features make it delicate to transfer the now-standard
``local weak limit $\Rightarrow$ limiting spectrum'' paradigm from bounded-degree graphs.

\subsection{Local weak limits and the P\'olya--point graph}

A central organizing principle for sparse random graphs is local weak convergence
(Benjamini--Schramm convergence): one samples a uniform vertex, inspects its neighborhood of
fixed radius, and lets the graph size diverge \cite{BenjaminiSchramm2001,AldousLyons2007}.
For preferential attachment, the typical neighborhood retains ``age'' information and is far from
exchangeable. A key result  of Berger--Borgs--Chayes--Saberi \cite{BergerBorgsChayesSaberi2014}
identifies the local weak limit for a broad class of linear PA graphs as an explicit marked infinite
random graph, often called the P\'olya--point graph. This limit has since become a key
approximation device for local network properties (e.g.\ robustness/vulnerability phenomena,
see \cite{EckhoffMorters2014}).

\subsection{Spectral limits for sparse graphs}

For bounded-degree, locally tree-like graphs, resolvent/Green function methods relate limiting ESDs
of adjacency-type operators to the root resolvent on the local weak limit; a systematic approach
appears in \cite{BordenaveLelarge2010}. For random trees, convergence of ESDs under fringe/local
assumptions is well understood, and preferential attachment trees fall within that scope for
adjacency-type operators \cite{BhamidiEvansSen2012}.

In contrast, for power-law graphs with independent edges (Chung--Lu type models),
Chung--Lu--Vu \cite{ChungLuVu2003} show that the spectral behavior depends sensitively on the
operator: adjacency spectra display heavy-tail effects, while the normalized Laplacian can exhibit
random-matrix-type behavior under suitable degree conditions. Related Laplacian spectral limit results in scale-free models with conditionally independent edges include Hazra and Malhotra \cite{HazraMalhotra2025} who study the empirical spectral distribution of a centred Laplacian for scale-free percolation  and identify an explicit limiting law in one of their main parameter regimes. Preferential attachment graphs share
the power-law degree feature, but their temporal correlations lead to markedly different spectral
phenomena. For instance, for adjacency matrices, extreme eigenvalues are driven by hubs and leading
eigenvectors may be highly localized; see \cite{DorogovtsevGoltsevMendesSamukhin2003,MontealegreVu2019}
and references therein. Providing a  limit theorem for the ESD of the normalized Laplacian
in the Barab\'asi--Albert preferential attachment regime is the goal of the present paper.

\subsection{Our contribution}

We consider the  preferential attachment multigraph in which each new vertex introduces $m\ge 2$
edges and endpoints are chosen proportionally to degree (sampling with replacement).
Let $L_n$ be the normalized Laplacian on the $n$-vertex graph and let
\[
\mu_n := \frac{1}{n}\sum_{i=1}^n \delta_{\lambda_i(L_n)}
\]
be its empirical spectral distribution. Our main result proves that $\mu_n$ converges weakly in
probability to a \emph{deterministic} probability measure $\mu$ supported on $[0,2]$.

The limit is identified through the P\'olya--point local weak limit $(G_\infty,o)$ of
\cite{BergerBorgsChayesSaberi2014}. Writing $L_\infty := I - D_\infty^{-1/2}A_\infty D_\infty^{-1/2}$
for the normalized Laplacian operator on $G_\infty$, we show that for every $z\in\mathbb{C}_+$,
the Stieltjes transform
\[
m_n(z) \;:=\; \int \frac{1}{x-z}\,\mu_n(dx) \;=\; \frac{1}{n}\mathrm{Tr}(L_n-zI)^{-1}
\]
converges in probability to the deterministic Herglotz function
\[
m(z) \;=\; \mathbb{E}\big\langle \delta_o,(L_\infty-zI)^{-1}\delta_o\big\rangle,
\]
and $\mu$ is the unique probability measure with Stieltjes transform $m$
(\cite{BaiSilverstein2010}).

\subsection{Organization}

Section~\ref{sec:model} defines the preferential attachment multigraph and the operators.
Section~\ref{sec:neumann} develops the resolvent bounds and the Neumann expansion with uniform error control.
Section~\ref{sec:locality} identifies diagonal powers of $W_n$ with random walk return probabilities and
formalizes their locality as functionals of decorated rooted balls. Section~\ref{sec:LWC_concentration} presents the local weak
convergence to the P\'olya--point graph and concentration of local averages. Section  ~\ref{sec:neumann-domain} gives the convergence on the
Neumann domain, and Section~\ref{sec:extension} extends convergence to all of $\mathbb{C}_+$ and concludes the
proof of $\mu_n\Rightarrow\mu$ in probability.

\section{Model, operators and main result }\label{sec:model}

\subsection{The Barab\'asi--Albert preferential attachment multigraph}\label{subsec:PA-model}

Fix an integer $m\ge 2$.
For each $n\ge 2$, we construct a random (multi)graph $G_n$ on vertex set
\[
V(G_n)=\{1,2,\dots,n\},
\]
where the label of a vertex coincides with its birth time.

\paragraph{Initial condition.}
We start with $G_2$ consisting of two vertices $\{1,2\}$ joined by $m$ parallel edges.
Equivalently, the (multi-)degree satisfies $d_2(1)=d_2(2)=m$.

\paragraph{Growth step.}
Given $G_{n-1}$ for some $n\ge 3$, we obtain $G_n$ by adding the new vertex $n$ together with
$m$ edges connecting $n$ to vertices in $\{1,\dots,n-1\}$.
More precisely, conditionally on $G_{n-1}$ we sample endpoints
\[
W_{n,1},\dots,W_{n,m}\in\{1,\dots,n-1\}
\]
\emph{independently} with
\begin{equation}\label{eq:PA-rule}
\mathbb{P}(W_{n,i}=v \mid G_{n-1})
=\frac{d_{n-1}(v)}{\sum_{u=1}^{n-1} d_{n-1}(u)}
=\frac{d_{n-1}(v)}{2m(n-2)},
\qquad v\in\{1,\dots,n-1\}.
\end{equation}
We then add edges $(n,W_{n,1}),\dots,(n,W_{n,m})$ to $G_{n-1}$.
Sampling with replacement allows multiple edges between $n$ and a given older vertex.
We exclude self-loops since the endpoints are chosen among $\{1,\dots,n-1\}$.

\paragraph{Degrees and total degree.}
Let $d_n(v)$ denote the (multi-)degree of $v$ in $G_n$.
Since each vertex is born with $m$ incident edges, we have $d_n(v)\ge m$ for all $v\le n$.
Moreover, the number of edges in $G_n$ is $m(n-1)$ and hence
\[
\sum_{v=1}^n d_n(v)=2m(n-1).
\]

\subsection{Adjacency, random-walk kernel, and normalized Laplacian}\label{subsec:operators}

Let $A_n$ be the $n\times n$ adjacency matrix of $G_n$ with multiplicities:
for $u\neq v$, $(A_n)_{uv}$ is the number of edges between $u$ and $v$, and $(A_n)_{uu}=0$.
Let $D_n=\mathrm{diag}(d_n(1),\dots,d_n(n))$ be the diagonal degree matrix.

\paragraph{Simple random walk and its kernel.}
We consider the (discrete-time) simple random walk on the multigraph $G_n$.
Given a current vertex $u\in\{1,\dots,n\}$, the walk moves in one step to a neighbor $v$ by choosing
uniformly among the $d_n(u)$ incident half-edges at $u$.  In particular, edge multiplicities are taken into account.
Equivalently, the transition probabilities are
\begin{equation}\label{eq:rw-kernel}
  \mathbb{P}(X_{t+1}=v \mid X_t=u,\,G_n)
  \;=\;
  \frac{(A_n)_{uv}}{d_n(u)},
  \qquad u,v\in\{1,\dots,n\}.
\end{equation}
In matrix form, the transition kernel is the row-stochastic matrix
\[
  P_n \coloneqq D_n^{-1}A_n .
\]
The chain is reversible with stationary distribution $\pi_n(u)=d_n(u)/\sum_{w}d_n(w)$ (see, e.g.,
\cite[Ch.~2]{LevinPeres2017} or \cite[Ch.~1]{Chung1997}).
For $k\ge 1$, the diagonal entry $(P_n^k)_{uu}$ equals the $k$-step return probability to $u$.

\paragraph{Normalized adjacency and normalized Laplacian.}
Define the symmetric normalized adjacency matrix and normalized Laplacian by
\begin{equation}\label{eq:norm-adj-lap}
W_n \coloneqq D_n^{-1/2}A_nD_n^{-1/2},
\qquad
L_n \coloneqq I_n - W_n.
\end{equation}
The operator $W_n$ is similar to $P_n$ via
\begin{equation}\label{eq:similarity}
W_n = D_n^{1/2} P_n D_n^{-1/2},
\end{equation}
and therefore they have the same spectrum.
Moreover, for every $k\ge 1$ and every vertex $v$,
\begin{equation}\label{eq:diag-power-return}
(W_n^k)_{vv} = (P_n^k)_{vv}.
\end{equation}
\begin{remark}
$\mathcal{L}_n$ is symmetric positive semidefinite.
Moreover, all eigenvalues of $\mathcal{L}_n$ lie in $[0,2]$ \cite{Chung1997,SpielmanNotes}.
\end{remark}

\subsection{Empirical spectral distribution and Stieltjes transform}\label{subsec:ESD}

Let $\lambda_1^{(n)},\dots,\lambda_n^{(n)}$ be the eigenvalues of $L_n$ (counted with multiplicity).
The empirical spectral distribution (ESD) of $L_n$ is the random probability measure
\begin{equation}\label{eq:ESD}
\mu_n \coloneqq \frac{1}{n}\sum_{i=1}^n \delta_{\lambda_i^{(n)}}.
\end{equation}
For $z\in\mathbb{C}_+\coloneqq\{z\in\mathbb{C}:\Im z>0\}$, define the resolvent and the Stieltjes transform
\begin{equation}\label{eq:resolvent-stieltjes}
G_n(z) \coloneqq (L_n-zI_n)^{-1},
\qquad
m_n(z) \coloneqq \int_{\mathbb{R}} \frac{1}{x-z}\,\mu_n(dx)
= \frac{1}{n}\mathrm{Tr}\,G_n(z).
\end{equation}
\begin{remark}[Uniform resolvent bound \cite{ReedSimon1980,Bhatia1997}]
Since $\mathrm{Spec}(\mathcal{L}_n)\subset[0,2]$, for all $z\in\mathbb{C}^+$ we have
\[
\|G_n(z)\|\le \frac{1}{\Im z}
\quad\Rightarrow\quad |(G_n(z))_{uu}|\le \frac{1}{\Im z}.
\]
\end{remark}

\subsection{The local weak limit and the limiting operator}\label{subsec:lwc-limit-operator}

Let $U_n$ be a uniformly chosen vertex of $G_n$, conditionally independent given $G_n$.
We view $(G_n,U_n)$ as a random rooted multigraph (with edge multiplicities) and consider
the \emph{local} topology on rooted graphs: for $r\ge 0$, the rooted $r$-ball
$B_r(G_n,U_n)$ is the rooted subgraph induced by vertices at graph distance at most $r$ from $U_n$.

\begin{theorem}[Local weak limit \cite{BergerBorgsChayesSaberi2014}]\label{thm:lwc}
As $n\to\infty$, the rooted graphs $(G_n,U_n)$ converge in distribution for the local topology to a random
infinite rooted graph $(G_\infty,o)$.
The law of $(G_\infty,o)$ is the \emph{P\'olya--point graph} constructed in
\cite{BergerBorgsChayesSaberi2014}; see also \cite{BenjaminiSchramm2001,AldousLyons2007} for the general framework.
\end{theorem}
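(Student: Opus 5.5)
This theorem is imported from \cite{BergerBorgsChayesSaberi2014} rather than proved here, so the plan reconstructs their argument in the present setting. The strategy is the standard two-step method for local weak convergence. First, construct a finite-size \emph{coupling} between the $r$-ball $B_r(G_n,U_n)$ and the $r$-ball of the P\'olya--point graph. Second, show that empirical neighbourhood counts concentrate, which upgrades convergence in distribution to convergence in probability of the empirical neighbourhood measures.

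The first step is to re-encode the model as a P\'olya urn representation. Degree-proportional sampling with replacement is equivalent to the following construction:
\begin{itemize}
\item draw independent Beta variables $\psi_k\sim\mathrm{Beta}(m,(2k-3)m)$;
\item set $S_k=\prod_{j>k}(1-\psi_j)$;
\item attach each of the $m$ edges of vertex $k$ to an independent uniform point of $[0,S_{k-1}]$, and assign it to the interval $[S_{j-1},S_j)$ containing it.
\end{itemize}
Given the $\psi$'s, the edges are conditionally independent. This removes the temporal dependence emphasized in the introduction.

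Next, use the law of large numbers for the $\psi$'s: uniformly over $k\ge \epsilon n$, one has $S_k\approx (k/n)^{1/2}$. Rescale positions by $x=k/n$. Exploring the ball around a uniform vertex $U_n=\lceil n\chi\rceil$ with $\chi$ uniform then becomes a multitype branching exploration:
\begin{itemize}
\item vertices carry a position $x\in(0,1]$, a type (L for older, R for younger), and a Gamma-distributed strength obtained as the limit of $n\psi_k\cdot$(scaling);
\item an older neighbour has position $x\,U^2$ with $U$ uniform;
\item younger neighbours arrive as a Poisson process on $[x,1]$ whose intensity is $\gamma\,\frac{1}{2\sqrt{xy}}$, with $\gamma$ the Gamma strength.
\end{itemize}
This is exactly the P\'olya--point graph.

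To make the coupling quantitative for fixed $r$, carry out the exploration step by step. At each step, bound the total variation distance between the true conditional law of the next revealed edges and the limiting Poisson/uniform law. The main sources of error are the following.
\begin{enumerate}
\item Vertices with position below $\epsilon$. These are handled because $\chi$ is uniform and positions shrink only by the factor $U^2$, so the ball avoids $[0,\epsilon]$ with probability $1-O(\epsilon^{c})$.
\item Cycles and collisions inside the ball. These have probability $O(\text{poly}(\log n)/n)$, since the ball has bounded size with high probability.
\item Fluctuations of $S_k$. These are controlled by martingale or Chernoff bounds on sums of Beta variables.
\end{enumerate}
Multi-edges must be tracked with their multiplicities, since the theorem uses the multigraph topology. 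The limit has no multi-edges, so multi-edges must be shown to vanish in the ball, which is covered by the collision estimate.

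For the second step, fix a finite rooted graph $H$ and apply a second-moment argument to $\frac1n\#\{v: B_r(G_n,v)\cong H\}$. Choose two uniform roots independently and run the coupled explorations jointly. Their balls are disjoint with high probability, and the two explorations are asymptotically independent, so the variance tends to $0$. This step is only needed for the convergence in probability used later in the paper; the distributional statement of Theorem~\ref{thm:lwc} follows from the first step alone.

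The hardest step is the uniform control of the coupling near the old vertices. The strengths of old vertices are heavy-tailed, since early vertices become hubs. I would first truncate to positions $x\ge\epsilon$ and strengths $\le K$, show that the truncated exploration has total variation error $o(1)+O(\epsilon^c+e^{-cK})$, and then let $n\to\infty$ first, followed by $\epsilon\to0$ and $K\to\infty$.
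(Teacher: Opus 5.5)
Your route is that of \cite{BergerBorgsChayesSaberi2014}: a P\'olya urn representation, a step-by-step total-variation coupling of the exploration of $B_r(G_n,U_n)$ with the P\'olya--point graph, and truncation of old vertices and large strengths. The paper gives no proof of its own and simply imports that result. However, your first step contains a concrete error. The Beta/interval construction, with $m$ conditionally i.i.d.\ uniform points in $[0,S_{k-1}]$, is \emph{not} equivalent to the paper's model. In the paper, the $m$ endpoints of vertex $k$ are drawn independently from the degrees $d_{k-1}(\cdot)$, as in \eqref{eq:PA-rule}. Once the $\psi$'s are integrated out, the $m$ edges of a new vertex in your construction reinforce each other like successive draws from a P\'olya urn. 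Already for $m=2$, your construction sends both edges of vertex $3$ to vertex $2$ with probability $\E[\psi_2^2]=\frac{2\cdot 3}{4\cdot 5}=\frac{3}{10}$ (since $\psi_2\sim\mathrm{Beta}(2,2)$), whereas \eqref{eq:PA-rule} gives $\frac14$. The representation is exact for the \emph{sequential} variant, where degrees are updated after each of the $m$ edges. It is not exact for sampling with replacement from $d_{k-1}$.

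You therefore need an additional step showing that the independent model has the same local limit, and this comparison must be localized. At time $k$ the two conditional laws of the batch of $m$ edges differ in total variation by order $1/k$. Summed over $k$, this gives about $\log n$ discrepant batches, so any coupling under which the two graphs coincide fails with probability tending to one. What works is to compare only what the exploration reads. For a fixed vertex $v$, the number of edges it receives at time $j$ is binomial under one rule and beta-binomial under the other. These laws are within total variation $O(d_j(v)/j^2)$, which sums to $O(1/v)$ over $j\ge v$ and is negligible for positions $v\ge\epsilon n$. This must be combined with a law of large numbers for the degree profile that governs the choice of older neighbours.

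Relatedly, your claim that the exploration converges to ``exactly the P\'olya--point graph'' requires building posterior updating into the total-variation bounds. Once an edge is revealed to land in $I_j$, $\psi_j$ becomes $\mathrm{Beta}(m+1,(2j-3)m)$. This is why type-L vertices carry size-biased $\mathrm{Gamma}(m+1,1)$ strengths, while the root and type-R vertices carry $\mathrm{Gamma}(m,1)$ strengths. Also, type-R vertices have only $m-1$ further older neighbours. Your description of the limit omits both of these features.
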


We next define the limiting operators associated with $(G_\infty,o)$.
Write $A_\infty$ for the adjacency matrix of $G_\infty$ (with $(A_\infty)_{xx}=0$ and
$(A_\infty)_{xy}$ equal to the number of edges between $x\neq y$), and let
$d_\infty(x)\coloneqq \sum_{y}(A_\infty)_{xy}$ be the degree of $x$.
Almost surely, $d_\infty(x)<\infty$ for every vertex $x$ \cite{BergerBorgsChayesSaberi2014}.

Define the normalized adjacency operator $W_\infty$ by
\begin{equation}\label{eq:Winfty}
  (W_\infty)_{xy}\;\coloneqq\;
  \begin{cases}
    \displaystyle \frac{(A_\infty)_{xy}}{\sqrt{d_\infty(x)d_\infty(y)}}, & x\neq y,\\[1ex]
    0, & x=y,
  \end{cases}
\end{equation}
and the normalized Laplacian operator
\begin{equation}\label{eq:Linfty}
  L_\infty \;\coloneqq\; I - W_\infty .
\end{equation}
Since $G_\infty$ is locally finite a.s., the operator $W_\infty$ defined in \eqref{eq:Winfty} is a bounded
self-adjoint operator on $\ell^2(V(G_\infty))$ with $\|W_\infty\|\le 1$; consequently
$L_\infty=I-W_\infty$ is bounded self-adjoint with $\mathrm{Spec}(L_\infty)\subset[0,2]$
(see, e.g., \cite[Ch.~1]{Chung1997} and \cite{BordenaveLelarge2010}).
Therefore, for every $z\in\mathbb C_+$ the resolvent $(L_\infty-zI)^{-1}$ exists and satisfies
$\|(L_\infty-zI)^{-1}\|\le (\Im z)^{-1}$.

We denote the rooted Green function by
\begin{equation}\label{eq:root-green}
  g_\infty(o;z)\;\coloneqq\;\big\langle \delta_o,\,(L_\infty-zI)^{-1}\delta_o\big\rangle .
\end{equation}
Here $\delta_o\in \ell^2(V(G_\infty))$ denotes the canonical basis vector at the root, i.e.\ $\delta_o(x)=\mathbf{1}_{\{x=o\}}$.

\subsection{Main result and roadmap}\label{subsec:main-roadmap}

Let $\mu_n$ be the empirical spectral distribution of $L_n$ defined in \eqref{eq:ESD}, and let
$m_n(z)=\int (x-z)^{-1}\mu_n(dx)=\frac1n\mathrm{Tr}(L_n-zI)^{-1}$ be its Stieltjes transform.
Define the candidate limiting Stieltjes transform by
\begin{equation}\label{eq:limit-stieltjes}
  m(z)\;\coloneqq\;\mathbb{E}\big[g_\infty(o;z)\big], \qquad z\in\mathbb{C}_+,
\end{equation}
where $g_\infty$ is given by \eqref{eq:root-green}. By the spectral theorem for bounded self-adjoint operators (see, e.g., \cite{ReedSimon1980}),
for each realization of $(G_\infty,o)$ there exists a probability measure $\mu_{\infty,o}$
supported on $\mathrm{Spec}(L_\infty)\subset[0,2]$ such that
$g_\infty(o;z)=\int (\lambda-z)^{-1}\,\mu_{\infty,o}(d\lambda)$ for all $z\in\mathbb C_+$.

\begin{theorem}[Limiting empirical spectral measure]\label{thm:main}
Fix $m\ge 2$ and let $(G_n)_{n\ge2}$ be the Barab\'asi--Albert preferential attachment multigraph defined in
Section~\ref{subsec:PA-model}. Let $L_n$ be the normalized Laplacian \eqref{eq:norm-adj-lap} and $\mu_n$ its ESD.
\begin{enumerate}
\item For every $z\in\mathbb{C}_+$,
\[
  m_n(z)\xrightarrow[n\to\infty]{\mathbb{P}} m(z)=\mathbb{E}\big[g_\infty(o;z)\big].
\]
\item There exists a deterministic probability measure $\mu$ on $[0,2]$ such that
\[
  \mu_n \Rightarrow \mu \quad \text{weakly in probability},
\]
and $m$ is the Stieltjes transform of $\mu$.
\end{enumerate}
\end{theorem}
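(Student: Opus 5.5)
The plan follows the roadmap announced in the organization of the paper. We first work on the Neumann domain $\mathcal{D}:=\{z\in\mathbb{C}_+: |z-1|>1+\eta\}$ for a fixed $\eta>0$. Since $\|W_n\|\le 1$, for $z\in\mathcal D$ we can write
\[
G_n(z)=\big((1-z)I-W_n\big)^{-1}=\sum_{k\ge0}(1-z)^{-k-1}W_n^k ,
\]
and this series converges in operator norm, uniformly in $n$. The tail beyond order $K$ is bounded by $(1+\eta)^{-K}/\eta$, independently of the graph. Hence
\[
m_n(z)=\sum_{k\le K}(1-z)^{-k-1}\frac1n\sum_{v}(W_n^k)_{vv}+O\big((1+\eta)^{-K}\big).
\]
The same expansion with the same error holds for $g_\infty(o;z)$, because $\|W_\infty\|\le1$. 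By \eqref{eq:diag-power-return}, $(W_n^k)_{vv}=(P_n^k)_{vv}$ is the $k$-step return probability of simple random walk. This quantity is a bounded function, with values in $[0,1]$, of the rooted ball $B_k(G_n,v)$ decorated with edge multiplicities and with degrees. The degrees are needed because boundary vertices at distance $k$ have edges leaving the ball. The same local formula gives $(W_\infty^k)_{oo}$ on $G_\infty$.

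The second step is convergence of the moments. For fixed $k$ we need
\[
\frac1n\sum_v (P_n^k)_{vv}\xrightarrow{\mathbb P}\mathbb E\big[(P_\infty^k)_{oo}\big].
\]
Convergence of the expectations follows from Theorem~\ref{thm:lwc}, provided the local topology also controls the degrees of vertices in the ball. To get this, we use the ball of radius $k+1$: a return path of length $k$ only involves vertices at distance $\le \lceil k/2\rceil$, and their degrees are determined by $B_{k+1}$. The functional is bounded, so weak convergence suffices.

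Concentration is the main obstacle. Local weak convergence in the sense of Theorem~\ref{thm:lwc} is usually stated in probability for the empirical neighborhood measure, and if the source \cite{BergerBorgsChayesSaberi2014} gives that version we can invoke it directly. Otherwise we use a Doob martingale along the PA filtration $\mathcal F_t=\sigma(G_t)$. Changing the choice $W_{t,i}$ of a single edge alters $(P_n^k)_{vv}$ only for $v$ within distance $k$ of the endpoints involved, so the martingale differences are bounded by the size of those neighborhoods. This size is unbounded because of hubs. We therefore truncate: replace the functional by its version multiplied by the indicator that every vertex in $B_{k+1}(v)$ has degree at most $M$. The truncation error has expectation $\le \mathbb P(\text{some degree}>M \text{ in } B_{k+1}(U_n))$, which is small uniformly in $n$ by tightness from local convergence. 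The truncated functional has Lipschitz increments $O(M^{k+1}/n)$, and Azuma--Hoeffding gives a deviation probability of $\exp(-cn\varepsilon^2/M^{2k+2})$. The delicate point is that the edge-swap coupling also moves degrees used in the normalization. We handle this by summing over the $O(M^{k})$ roots whose truncated ball can see the changed edge.

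Finally, we extend the result. The functions $m_n$ and $m$ are analytic on $\mathbb C_+$ and bounded by $1/\Im z$ on $\{\Im z>\delta\}$, so the family is normal. Take a countable dense set of points in $\mathcal D\cap\mathbb C_+$ and a subsequence along which convergence holds almost surely at all these points (diagonal extraction). Vitali's theorem then gives locally uniform convergence on all of $\mathbb C_+$, since $\mathcal D\cap\mathbb C_+$ has accumulation points. The subsequence-of-subsequence argument upgrades this to convergence in probability at every $z$, which is part (1). For part (2), $m$ is a pointwise limit of Stieltjes transforms of probability measures on $[0,2]$, and $|m(iy)|\,y\to1$ uniformly by compactness of the supports. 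Hence $m$ is the Stieltjes transform of a probability measure $\mu$ on $[0,2]$, namely $\mu=\mathbb E[\mu_{\infty,o}]$. The standard equivalence \cite{BaiSilverstein2010} between Stieltjes-transform convergence and weak convergence, applied along subsequences, yields $\mu_n\Rightarrow\mu$ in probability.
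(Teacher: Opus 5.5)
Your proposal follows the paper's proof essentially step for step: Neumann expansion on a region of the form $|1-z|>1$ with a graph-independent tail, return probabilities as functionals of degree-decorated balls read off from $B_{k+1}$, degree truncation with a Doob martingale and Azuma--Hoeffding along the PA filtration, Vitali with a subsequence argument, and Stieltjes continuity; your explicit check that $y\,|m(iy)|\to 1$, identifying $\mu=\E[\mu_{\infty,o}]$, is in fact more careful than the paper's bare appeal to the Herglotz property. One point that both you and the paper leave implicit is that changing $W_{t,i}$ from $v$ to $v'$ also changes the law of all later attachments, so the bounded-difference bound needs a coupling of the future (e.g.\ choose each endpoint as the owner of an independent uniformly chosen half-edge label), under which the two graphs differ only by moving some edges from $v$ to $v'$, and hence only roots within distance $k+1$ of $\{v,v'\}$ along vertices of degree at most $M$ can change value.
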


\paragraph{Roadmap of the proof.}
The argument follows the general ``local weak limit $\Rightarrow$ spectral limit'' paradigm for sparse graphs
and proceeds in five steps.

\smallskip\noindent
\textbf{Step 1: Neumann expansion on a nontrivial domain.}
Write $L_n-zI=(1-z)I-W_n$ with $W_n=D_n^{-1/2}A_nD_n^{-1/2}$.
Since $\|W_n\|\le 1$ and $\mathrm{Spec}(L_n)\subset[0,2]$, for $z$ in the domain
$\{z\in\mathbb{C}_+:\ |1-z|>1\}$ we expand the resolvent by a Neumann series and obtain a uniform truncation bound.
This reduces $m_n(z)$ to finitely many averages of diagonal terms $(W_n^k)_{uu}$.

\smallskip\noindent
\textbf{Step 2: Diagonal powers as return probabilities.}
Using the similarity between $W_n$ and the random-walk kernel $P_n=D_n^{-1}A_n$,
we identify $(W_n^k)_{uu}=(P_n^k)_{uu}$.
Hence $(W_n^k)_{uu}$ is the $k$-step return probability of the simple random walk started at $u$.

\smallskip\noindent
\textbf{Step 3: Locality.}
For $k\ge 0$ and a vertex $u\in V(G_n)$, let $B_k(G_n,u)$ denote the rooted radius-$k$ neighborhood of $u$,
i.e.\ the rooted subgraph induced by vertices at graph distance at most $k$ from $u$.
For each fixed $k$, the return probability $(P_n^k)_{uu}$ depends only on $B_k(G_n,u)$ together with the local
degree/edge-multiplicity information appearing in $A_n$ and $D_n$.
Thus $(W_n^k)_{uu}$ can be written as a bounded local functional of the rooted $k$-ball.

\smallskip\noindent
\textbf{Step 4: Local weak limit and  martingale concentration of local averages.}
After lifting local convergence to marked neighborhoods, we prove a self-averaging law for empirical local statistics by truncating high-degree neighborhoods, applying a Doob martingale-Azuma-Hoeffding concentration argument along the PA growth filtration, and then removing the truncation using the local weak limit. 

\smallskip\noindent
\textbf{Step 5: Extension to all $z\in\mathbb{C}_+$.}
The uniform bound $|m_n(z)|\le (\Im z)^{-1}$ implies that $(m_n)_n$ is a normal family on $\mathbb{C}_+$.
A Vitali/Montel argument upgrades convergence from the Neumann domain to all of $\mathbb{C}_+$.
Finally, the Stieltjes continuity theorem implies weak convergence of $\mu_n$ to a deterministic limit $\mu$.

\section{Resolvent bounds and a Neumann expansion}\label{sec:neumann}

\subsection{Resolvent preliminaries}\label{subsec:resolvent-prelim}

Recall that $L_n=I-W_n$ is a real symmetric $n\times n$ matrix and hence defines a bounded self-adjoint
operator on $\ell^2(\{1,\dots,n\})$. In particular, its spectrum is real and contained in $[0,2]$
(for normalized Laplacians, see, e.g., \cite[Ch.~1]{Chung1997}).
For $z\in\C\setminus\R$ we denote the resolvent by
\[
  G_n(z)\;\coloneqq\; (L_n-zI)^{-1},
\]
and for a vertex $u\in\{1,\dots,n\}$ we write the diagonal Green function
\[
  g_n(u;z)\;\coloneqq\;\langle \delta_u,\,G_n(z)\delta_u\rangle \;=\; (G_n(z))_{uu}.
\]
The Stieltjes transform of the empirical spectral distribution $\mu_n$ is
$m_n(z)=\frac1n\mathrm{Tr}\,G_n(z)=\frac1n\sum_{u=1}^n g_n(u;z)$, for $z\in\Cp$.

\begin{lemma}[Uniform resolvent bound]\label{lem:resolvent-bound}
For every $n\ge 2$ and every $z\in\Cp$,
\begin{equation}\label{eq:resolvent-bound}
  \|G_n(z)\|\;\le\; \frac{1}{\Im z}.
\end{equation}
Consequently, for every $u\in\{1,\dots,n\}$,
\begin{equation}\label{eq:diag-bound}
  |g_n(u;z)| \le \frac{1}{\Im z},
  \qquad
  |m_n(z)| \le \frac{1}{\Im z}.
\end{equation}
\end{lemma}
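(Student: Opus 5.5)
The plan is to use the spectral theorem for the real symmetric matrix $L_n$. First I will check that $D_n$ is invertible: every vertex has $d_n(v)\ge m\ge 2>0$, as noted in Section~\ref{subsec:PA-model}. Hence $W_n=D_n^{-1/2}A_nD_n^{-1/2}$ is a well-defined real symmetric matrix, and so is $L_n=I-W_n$. Then I write $L_n=U\Lambda U^{\top}$, where $U$ is orthogonal and $\Lambda=\mathrm{diag}(\lambda_1^{(n)},\dots,\lambda_n^{(n)})$ has real entries. For $z\in\Cp$ the number $z$ is not real, so every $\lambda_i^{(n)}-z\neq0$. Therefore $L_n-zI=U(\Lambda-zI)U^{\top}$ is invertible and
\[
G_n(z)=U(\Lambda-zI)^{-1}U^{\top}.
\]
Since $G_n(z)$ is unitarily diagonalizable, its operator norm equals $\max_i|\lambda_i^{(n)}-z|^{-1}$. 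For real $\lambda$ we have $|\lambda-z|\ge|\Im(\lambda-z)|=\Im z$, which gives \eqref{eq:resolvent-bound}. This step does not even need the finer information $\mathrm{Spec}(L_n)\subset[0,2]$; reality of the spectrum is enough.

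For the consequences \eqref{eq:diag-bound}, I will use Cauchy--Schwarz on the diagonal entry:
\[
|g_n(u;z)|=|\langle\delta_u,G_n(z)\delta_u\rangle|\le\|\delta_u\|\,\|G_n(z)\delta_u\|\le\|G_n(z)\|\le(\Im z)^{-1}.
\]
Averaging over $u$ with the triangle inequality then gives $|m_n(z)|\le\frac1n\sum_u|g_n(u;z)|\le(\Im z)^{-1}$. Alternatively, the same bound follows directly from $m_n(z)=\int(x-z)^{-1}\mu_n(dx)$, since $|x-z|^{-1}\le(\Im z)^{-1}$ pointwise on $\R$.

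There is no real obstacle here. The only point requiring care is the invertibility of $D_n$: this is what makes $L_n$ a genuine symmetric matrix, and hence guarantees a real spectrum. That invertibility is guaranteed by the minimum degree $m$ in the model.
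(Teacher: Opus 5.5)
Your proof is correct and takes the same route as the paper. You diagonalize the real symmetric $L_n$, read off $\|G_n(z)\|=\max_i|\lambda_i^{(n)}-z|^{-1}\le(\Im z)^{-1}$, and bound the diagonal entries and their average by the operator norm; your explicit check that $D_n$ is invertible (minimum degree $m$) is a detail the paper leaves implicit.
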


\begin{proof}
Since $L_n$ is real symmetric, there exists an orthonormal basis of eigenvectors and we may write
$L_n = U \Lambda U^\top$ with $\Lambda=\mathrm{diag}(\lambda_1^{(n)},\dots,\lambda_n^{(n)})$.
Hence
\[
  G_n(z)=U(\Lambda-zI)^{-1}U^\top,
\]
so $\|G_n(z)\|=\max_{1\le i\le n}|(\lambda_i^{(n)}-z)^{-1}| = \mathrm{dist}\big(z,\mathrm{Spec}(L_n)\big)^{-1}$.
For $z\in\Cp$ and $\lambda\in\R$, we have $|\lambda-z|\ge \Im z$, which yields \eqref{eq:resolvent-bound}.
The bounds in \eqref{eq:diag-bound} follow from $|g_n(u;z)|\le \|G_n(z)\|$ and
$|m_n(z)|\le \frac1n\sum_u |g_n(u;z)|\le \|G_n(z)\|$.
\end{proof}

\begin{lemma}[Herglotz property]\label{lem:herglotz}
For each $n\ge 2$ and each $u\in\{1,\dots,n\}$, the map $z\mapsto g_n(u;z)$ is analytic on $\Cp$ and satisfies
$\Im g_n(u;z)>0$ for $z\in\Cp$. In particular, $m_n$ is analytic on $\Cp$ and $\Im m_n(z)>0$ for $z\in\Cp$.
\end{lemma}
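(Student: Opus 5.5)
The plan is to reuse the spectral decomposition from the proof of Lemma~\ref{lem:resolvent-bound}. Write $L_n=U\Lambda U^\top$ with $U$ real orthogonal and $\Lambda=\mathrm{diag}(\lambda_1^{(n)},\dots,\lambda_n^{(n)})$. Then
\[
  g_n(u;z)=\sum_{i=1}^n \frac{|U_{ui}|^2}{\lambda_i^{(n)}-z},
\]
where the weights $w_i(u):=|U_{ui}|^2\ge 0$ satisfy $\sum_i w_i(u)=\|\delta_u\|^2=1$, because the rows of an orthogonal matrix are unit vectors. Equivalently, $g_n(u;z)=\int(\lambda-z)^{-1}\nu_u(d\lambda)$, where $\nu_u=\sum_i w_i(u)\delta_{\lambda_i^{(n)}}$ is the spectral measure of $\delta_u$ and is a probability measure on $[0,2]$.

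For analyticity, note that each summand $z\mapsto (\lambda_i^{(n)}-z)^{-1}$ is a rational function. Its only pole is at the real point $\lambda_i^{(n)}$, so it is holomorphic on $\Cp$. A finite linear combination of such functions is therefore analytic on $\Cp$.

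For the sign of the imaginary part, compute for $z=x+iy$ with $y>0$:
\[
  \Im\frac{1}{\lambda-z}=\frac{\Im(\overline{\lambda-z})}{|\lambda-z|^2}=\frac{y}{|\lambda-z|^2}>0 .
\]
Hence
\[
  \Im g_n(u;z)=\sum_i w_i(u)\,\frac{\Im z}{|\lambda_i^{(n)}-z|^2}\ge 0 .
\]
Strict positivity needs at least one $w_i(u)>0$. This holds because the weights sum to $1$. Since every term is nonnegative and at least one is strictly positive, the sum is strictly positive.

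For $m_n$, it is an average of the $g_n(u;\cdot)$ over $u$. Analyticity passes to finite averages, and an average of numbers with strictly positive imaginary part again has strictly positive imaginary part. Equivalently, $m_n(z)=\frac1n\sum_i (\lambda_i^{(n)}-z)^{-1}$, and each term has positive imaginary part.

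The only point requiring care is the strict inequality, which rests on $\sum_i w_i(u)=1$, i.e.\ on the orthogonality of $U$. The rest is routine.
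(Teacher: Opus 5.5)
Your proof is correct, but the key step differs from the paper's. The paper does not diagonalize. It uses the quadratic-form identity $\Im\langle x,(L_n-zI)^{-1}x\rangle=(\Im z)\,\|(L_n-zI)^{-1}x\|^2$, obtained by writing $x=(L_n-zI)y$ and using self-adjointness of $L_n$. Strict positivity for $x=\delta_u$ then comes from injectivity of the resolvent ($\delta_u\neq 0$ implies $(L_n-zI)^{-1}\delta_u\neq 0$), and analyticity from analyticity of the resolvent off the spectrum.

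You instead expand in the eigenbasis, so strict positivity comes from the spectral weights $|U_{ui}|^2$ summing to $1$. Analyticity comes from a finite sum of rational functions with real poles.

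What each route buys:
\begin{itemize}
\item The paper's argument is coordinate-free. It carries over verbatim to any bounded self-adjoint operator, in particular to $L_\infty$ on $\ell^2(V(G_\infty))$. That case is used implicitly later, when $m(z)=\E[g_\infty(o;z)]$ is asserted to be a Herglotz function.
\item Your argument gives more in finite dimension. It provides the explicit representation $g_n(u;z)=\int(\lambda-z)^{-1}\,\nu_u(d\lambda)$ with $\nu_u$ a probability measure on $[0,2]$. This also recovers the bound $|g_n(u;z)|\le(\Im z)^{-1}$ of Lemma~\ref{lem:resolvent-bound}, and it is the finite-dimensional analogue of the spectral-measure representation the paper invokes for $g_\infty(o;\cdot)$.
\item To transfer your route to $L_\infty$, you would need the projection-valued-measure form of the spectral theorem rather than a finite eigendecomposition.
\end{itemize}
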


\begin{proof}
Analyticity follows because $z\mapsto (L_n-zI)^{-1}$ is analytic on $\C\setminus\mathrm{Spec}(L_n)$.
To see the sign of the imaginary part, note that for any vector $x\in\C^n$,
\[
  \Im \langle x,\, (L_n-zI)^{-1}x\rangle
  \;=\; (\Im z)\,\|(L_n-zI)^{-1}x\|^2 \;>\;0,
\]
which is a standard resolvent identity for self-adjoint operators; see, e.g., \cite[Ch.~VII.3]{ReedSimon1980}.
Taking $x=\delta_u$ gives $\Im g_n(u;z)>0$, and averaging over $u$ yields $\Im m_n(z)>0$.
\end{proof}

\subsection{Neumann expansion on the domain \texorpdfstring{$|1-z|>1$}{|1-z|>1}}\label{subsec:neumann-expansion}

Throughout this subsection we work on the open set
\begin{equation}\label{eq:neumann-domain}
  \mathcal{D}\;\coloneqq\;\{z\in\C_+:\ |1-z|>1\}.
\end{equation}
Note that $\mathcal{D}$ is nonempty and has an accumulation point,
which will later allow analytic continuation to all of $\C_+$.

\paragraph{A contraction estimate for $W_n$.}
Recall $W_n=D_n^{-1/2}A_nD_n^{-1/2}$ and $L_n=I-W_n$.
For normalized Laplacians one has $\mathrm{Spec}(L_n)\subset[0,2]$,
hence $\mathrm{Spec}(W_n)=\{1-\lambda:\lambda\in\mathrm{Spec}(L_n)\}\subset[-1,1]$.
Since $W_n$ is real symmetric, its operator norm equals its spectral radius, and therefore
\begin{equation}\label{eq:Wn-norm}
  \|W_n\|\le 1.
\end{equation}

\paragraph{Neumann series for bounded operators \cite{ReedSimon1980}}
We will repeatedly use the following standard fact.

\begin{lemma}[Neumann series and tail bound]\label{lem:neumann-general}
Let $T$ be a bounded linear operator on a Hilbert space. If $\|T\|<1$, then $I-T$ is invertible and
\begin{equation}\label{eq:neumann-general}
  (I-T)^{-1}=\sum_{k=0}^{\infty} T^k
\end{equation}
with convergence in operator norm. Moreover, for every integer $K\ge 1$,
\begin{equation}\label{eq:neumann-tail}
  \Big\|(I-T)^{-1}-\sum_{k=0}^{K-1}T^k\Big\|
  \le \frac{\|T\|^K}{1-\|T\|}.
\end{equation}
\end{lemma}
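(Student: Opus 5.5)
The plan is the classical Banach-space argument, using only the submultiplicativity of the operator norm and the completeness of the space of bounded operators $\mathcal{B}(H)$ on the Hilbert space $H$. Set $q:=\|T\|<1$ and $S_K:=\sum_{k=0}^{K-1}T^k$. First we show that $(S_K)_K$ is Cauchy in operator norm. Since $\|T^k\|\le\|T\|^k=q^k$, for $K'>K$ we have
\[
\|S_{K'}-S_K\|\le\sum_{k=K}^{K'-1}q^k\le \frac{q^K}{1-q},
\]
so the series $\sum_{k\ge0}T^k$ converges absolutely in $\mathcal{B}(H)$. By completeness it converges to a bounded operator $S$.

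Next we identify $S$ as the inverse of $I-T$. The telescoping identities
\[
(I-T)S_K=S_K(I-T)=I-T^K
\]
hold for every $K$. Since $\|T^K\|\le q^K\to0$ and multiplication by the fixed bounded operator $I-T$ is norm-continuous, letting $K\to\infty$ gives $(I-T)S=S(I-T)=I$. Hence $I-T$ is invertible with two-sided bounded inverse $S$, which proves \eqref{eq:neumann-general} with convergence in operator norm.

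For the tail bound \eqref{eq:neumann-tail}, write $(I-T)^{-1}-S_K=\sum_{k\ge K}T^k$, where the series converges in norm. By the triangle inequality applied to partial sums, followed by a limit,
\[
\Big\|\sum_{k\ge K}T^k\Big\|\le\sum_{k\ge K}q^k=\frac{q^K}{1-q}.
\]
One can also avoid the limit by noting that $(I-T)^{-1}-S_K=T^K(I-T)^{-1}$ and $\|(I-T)^{-1}\|\le\sum_k q^k=(1-q)^{-1}$.

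No step here is a genuine obstacle. The only point needing care is the passage to the limit in the telescoping identity, which relies on the norm continuity of multiplication in $\mathcal{B}(H)$. The lemma will later be applied with $T=W_n/(1-z)$ on $\mathcal{D}$, where $\|T\|\le|1-z|^{-1}<1$ by \eqref{eq:Wn-norm}.
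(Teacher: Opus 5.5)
Your proof is correct and follows essentially the same route as the paper: partial sums $S_K$, the telescoping identity $(I-T)S_K=I-T^K$, norm convergence to $S$, and the geometric tail bound $\sum_{k\ge K}\|T\|^k=\|T\|^K/(1-\|T\|)$. You are slightly more careful than the paper in two places: you obtain convergence of $S_K$ from the Cauchy estimate rather than from $\|T^K\|\to 0$ alone, and you also check $S(I-T)=I$, which gives a genuine two-sided inverse.
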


\begin{proof}
This is a classical result; see, e.g., \cite[Ch.~VI.1]{ReedSimon1980}.
For completeness, note that the partial sums $S_K=\sum_{k=0}^{K-1}T^k$ satisfy
$(I-T)S_K = I - T^K$. Since $\|T^K\|\le \|T\|^K\to 0$, $S_K$ converges in operator norm to a bounded operator $S$
and $(I-T)S=I$. The tail bound follows from
\[
  (I-T)^{-1}-S_K = \sum_{k=K}^\infty T^k,
  \qquad
  \Big\|\sum_{k=K}^\infty T^k\Big\|
  \le \sum_{k=K}^\infty \|T\|^k
  = \frac{\|T\|^K}{1-\|T\|}.
\]
\end{proof}

\paragraph{Neumann expansion of the resolvent.}
For $z\in\mathcal{D}$ we write
\begin{equation}\label{eq:resolvent-factor}
  L_n - zI \;=\; (1-z)I - W_n \;=\; (1-z)\Big(I - \frac{W_n}{1-z}\Big).
\end{equation}
Set $T_{n}(z)\coloneqq W_n/(1-z)$. Using \eqref{eq:Wn-norm}, for $z\in\mathcal{D}$ we have
\[
  \|T_{n}(z)\| \le \frac{\|W_n\|}{|1-z|} \le \frac{1}{|1-z|} < 1.
\]
Applying Lemma~\ref{lem:neumann-general} yields the following expansion.

\begin{proposition}[Neumann expansion with uniform tail control]\label{prop:neumann}
For every $z\in\mathcal{D}$,
\begin{equation}\label{eq:neumann-resolvent}
  G_n(z)=(L_n-zI)^{-1}
  =\frac{1}{1-z}\sum_{k=0}^{\infty}\Big(\frac{W_n}{1-z}\Big)^k,
\end{equation}
where the series converges in operator norm. Moreover, for every integer $K\ge 1$,
\begin{equation}\label{eq:neumann-resolvent-tail}
  \Big\|G_n(z)-\frac{1}{1-z}\sum_{k=0}^{K-1}\Big(\frac{W_n}{1-z}\Big)^k\Big\|
  \le \frac{|1-z|^{-K}}{|1-z|-1}.
\end{equation}
\end{proposition}

\begin{proof}
Combine \eqref{eq:resolvent-factor} with Lemma~\ref{lem:neumann-general} for $T=T_n(z)$ and note that
\[
  \|(I-T_n(z))^{-1}-\sum_{k=0}^{K-1}T_n(z)^k\|
  \le \frac{\|T_n(z)\|^K}{1-\|T_n(z)\|}
  \le \frac{|1-z|^{-K}}{1-|1-z|^{-1}}
  =\frac{|1-z|^{-K}}{|1-z|-1}.
\]
Multiplying by $|1-z|^{-1}$ in \eqref{eq:resolvent-factor} gives \eqref{eq:neumann-resolvent-tail}.
\end{proof}

\begin{corollary}[Reduction to traces of powers]\label{cor:trace-reduction}
For every $z\in\mathcal{D}$ and every integer $K\ge 1$,
\begin{equation}\label{eq:mn-trunc}
  m_n(z)
  =\frac{1}{1-z}\sum_{k=0}^{K-1}\frac{1}{(1-z)^k}\,\frac{1}{n}\mathrm{Tr}(W_n^k)
  \;+\; \varepsilon_{n,K}(z),
\end{equation}
where the truncation error satisfies the uniform bound
\begin{equation}\label{eq:mn-trunc-error}
  |\varepsilon_{n,K}(z)|\le \frac{|1-z|^{-K}}{|1-z|-1}.
\end{equation}
\end{corollary}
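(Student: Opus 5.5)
The statement to prove is Corollary~\ref{cor:trace-reduction}. It follows directly from Proposition~\ref{prop:neumann}. The plan is to take the normalized trace of the truncated Neumann expansion and control the remainder by the operator-norm tail bound.

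Fix $z\in\mathcal{D}$ and $K\ge 1$, and define the remainder operator
\[
R_{n,K}(z)\coloneqq G_n(z)-\frac{1}{1-z}\sum_{k=0}^{K-1}\Big(\frac{W_n}{1-z}\Big)^k .
\]
Taking $\frac1n\mathrm{Tr}$ of both sides, we use linearity of the trace and the identity $\frac1n\mathrm{Tr}\,G_n(z)=m_n(z)$ from \eqref{eq:resolvent-stieltjes}. Pulling the scalars $(1-z)^{-k-1}$ out of the trace gives exactly \eqref{eq:mn-trunc} with
\[
\varepsilon_{n,K}(z)\coloneqq \tfrac1n\mathrm{Tr}\,R_{n,K}(z).
\]
The convention $W_n^0=I_n$ makes the $k=0$ term equal to $\frac{1}{1-z}$.

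For the error bound we use the elementary inequality $|\frac1n\mathrm{Tr}\,M|\le \|M\|$, valid for any $n\times n$ matrix $M$. It holds because
\[
\Big|\tfrac1n\mathrm{Tr}\,M\Big| \le \frac1n\sum_u|\langle\delta_u,M\delta_u\rangle| \le \frac1n\sum_u\|M\| = \|M\|,
\]
the same argument used in Lemma~\ref{lem:resolvent-bound}. Applying it with $M=R_{n,K}(z)$ and invoking \eqref{eq:neumann-resolvent-tail} yields \eqref{eq:mn-trunc-error}.

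No step is a real obstacle. The only point worth noting is that the bound is uniform in $n$ and in the realization of $G_n$, since it depends only on $\|W_n\|\le 1$ from \eqref{eq:Wn-norm}.
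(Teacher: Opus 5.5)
Your proposal is correct and follows the paper's own proof: take normalized traces of the truncated Neumann expansion from Proposition~\ref{prop:neumann}, then bound the remainder using $|n^{-1}\mathrm{Tr}\,M|\le\|M\|$ together with \eqref{eq:neumann-resolvent-tail}. The paper leaves the trace inequality implicit, and your justification of it via the diagonal entries is a harmless addition.
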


\begin{proof}
Take normalized traces in \eqref{eq:neumann-resolvent} and in the truncated version of
\eqref{eq:neumann-resolvent-tail}. Using $|n^{-1}\mathrm{Tr}\,B|\le \|B\|$ for any matrix $B$ yields
\eqref{eq:mn-trunc-error}.
\end{proof}

\section{Random walk representation and locality}\label{sec:locality}

This section establishes the bridge between diagonal powers of the normalized adjacency $W_n$ and
local rooted neighborhoods. The key point is that for fixed $k$, the quantity $(W_n^k)_{uu}$
can be interpreted as a $k$-step return probability of the simple random walk and depends only on
a finite (decorated) neighborhood of $u$.

\subsection{Similarity between \texorpdfstring{$W_n$}{Wn} and the random-walk kernel}\label{subsec:similarity}

Recall that $P_n=D_n^{-1}A_n$ is the transition kernel of the simple random walk on $G_n$ (with multiplicities),
and $W_n=D_n^{-1/2}A_nD_n^{-1/2}$ is the symmetric normalized adjacency.

\begin{proposition}[Random-walk representation of diagonal powers]\label{prop:rw-diagonal}
Let $P_n=D_n^{-1}A_n$ be the random-walk kernel on $G_n$ (with multiplicities) and
$W_n=D_n^{-1/2}A_nD_n^{-1/2}$ the normalized adjacency.
Then, for every integer $k\ge 1$ and every vertex $u\in\{1,\dots,n\}$,
\begin{equation}\label{eq:rw-diagonal}
  W_n \;=\; D_n^{1/2} P_n D_n^{-1/2}
  \qquad\text{and}\qquad
  (W_n^k)_{uu} \;=\; (P_n^k)_{uu}.
\end{equation}
In particular, $(W_n^k)_{uu}$ equals the $k$-step return probability of the simple random walk started at $u$.
\end{proposition}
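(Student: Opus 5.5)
The plan is a direct matrix computation, split into three steps: the similarity identity, its propagation to powers, and the probabilistic interpretation.

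\textbf{Step 1 (similarity).} Every vertex satisfies $d_n(u)\ge m\ge 2>0$, so $D_n$ is an invertible diagonal matrix with positive entries. Hence $D_n^{\pm1/2}$ are well defined, and we may write
\[
D_n^{1/2}P_nD_n^{-1/2}=D_n^{1/2}D_n^{-1}A_nD_n^{-1/2}=D_n^{-1/2}A_nD_n^{-1/2}=W_n ,
\]
using only that diagonal matrices commute and $D_n^{1/2}D_n^{-1}=D_n^{-1/2}$.

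\textbf{Step 2 (powers).} By induction on $k$ the conjugating factors telescope:
\[
W_n^k=D_n^{1/2}P_n^kD_n^{-1/2}.
\]
Because $D_n^{\pm1/2}$ is diagonal, taking the $(u,u)$ entry gives
\[
(W_n^k)_{uu}=d_n(u)^{1/2}(P_n^k)_{uu}\,d_n(u)^{-1/2}=(P_n^k)_{uu}.
\]

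\textbf{Step 3 (interpretation).} By \eqref{eq:rw-kernel}, $P_n$ is the one-step transition matrix of the simple random walk on $G_n$, with multiplicities counted. The Markov property (Chapman--Kolmogorov) then gives
\[
(P_n^k)_{uv}=\mathbb{P}(X_k=v\mid X_0=u,G_n).
\]
Taking $v=u$ yields the return probability. Equivalently, one can expand
\[
(P_n^k)_{uu}=\sum_{u=x_0,\dots,x_k=u}\prod_{i}\frac{(A_n)_{x_{i-1}x_i}}{d_n(x_{i-1})},
\]
a sum over closed walks of length $k$ weighted by their path probabilities.

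There is no real obstacle here. The only point needing care is the invertibility of $D_n$, which is guaranteed by $d_n(u)\ge m$; without this, one would have to worry about isolated vertices.
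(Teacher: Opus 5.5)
Your proposal is correct and follows essentially the same route as the paper: conjugate $P_n$ by $D_n^{1/2}$ to obtain $W_n$, telescope to $W_n^k=D_n^{1/2}P_n^kD_n^{-1/2}$, read off the diagonal entry, and invoke the standard Markov-chain interpretation of $(P_n^k)_{uu}$. Your explicit check that $D_n$ is invertible (since $d_n(u)\ge m$) is a small point the paper's proof leaves implicit.
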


\begin{proof}
Using $P_n=D_n^{-1}A_n$ we compute
\[
D_n^{1/2} P_n D_n^{-1/2}
= D_n^{1/2}(D_n^{-1}A_n)D_n^{-1/2}
= D_n^{-1/2}A_nD_n^{-1/2}
= W_n.
\]
Raising to the power $k$ gives $W_n^k=D_n^{1/2}P_n^kD_n^{-1/2}$, hence
\[
(W_n^k)_{uu}=(D_n^{1/2})_{uu}(P_n^k)_{uu}(D_n^{-1/2})_{uu}=(P_n^k)_{uu}.
\]
Finally, $(P_n^k)_{uu}=\PP_u(X_k=u\mid G_n)$ is the standard interpretation of $k$-step transition probabilities;
see, e.g., \cite[Ch.~2]{LevinPeres2017}.
\end{proof}
\subsection{Rooted balls and decorated local neighborhoods}\label{subsec:decorated-balls}

Fix a finite multigraph $G$ and a root $u\in V(G)$. For an integer $r\ge 0$, the (rooted) radius-$r$ ball
$B_r(G,u)$ is the rooted multigraph induced by vertices within graph distance at most $r$ from $u$.
To account for the fact that the random-walk transition probabilities involve total degrees in $G$
(and not only degrees within the induced subgraph), we work with a decorated version.

\begin{definition}[Decorated rooted ball]
\label{def:decorated-ball}
Let $G$ be a finite multigraph with adjacency matrix $A$ (with multiplicities) and degree function $d(\cdot)$.
For $r\ge 0$, the \emph{decorated rooted ball} $\widehat B_r(G,u)$ consists of
the rooted induced subgraph $B_r(G,u)$ together with the marks
\begin{enumerate}
\item the edge multiplicities on edges of $B_r(G,u)$ (equivalently, the restriction of $A$ to the vertex set of $B_r(G,u)$),
\item the \emph{full} degrees $\{d(x): x\in V(B_r(G,u))\}$ as vertex marks.
\end{enumerate}
Two decorated rooted balls are identified if there exists a root-preserving isomorphism that preserves
edge multiplicities and vertex degree marks.
\end{definition}

\begin{remark}[Decorations and marked rooted graphs]\label{rem:decorations-marked}
Working with decorated (or \emph{marked}) rooted neighborhoods is standard in the theory of local weak
convergence: one considers rooted graphs equipped with vertex/edge marks 
and compares neighborhoods up to root-preserving isomorphisms that preserve the marks; see, e.g.,
Aldous--Lyons \cite{AldousLyons2007}.
The specific decoration in Definition~\ref{def:decorated-ball} is chosen to match the random-walk kernel
$P(x,y)=A_{xy}/d(x)$: edge multiplicities inside the ball determine the numerators $A_{xy}$, while the
vertex marks $d(x)$ provide the denominators, including contributions of edges leaving the induced subgraph.
With this choice, $k$-step return probabilities are measurable functions of the decorated radius-$k$ ball.
\end{remark}

\subsection{Locality of return probabilities}\label{subsec:return-local}

We write $\widehat B_r$ for a generic radius $r$ and $\widehat B_k$ when the radius is equal to $k$.
\begin{lemma}[Locality]\label{lem:locality}
Fix an integer $k\ge 1$. There exists a deterministic map
\[
  F_k:\{\text{decorated rooted balls of radius }k\}\longrightarrow [0,1]
\]
such that for every $n\ge 2$ and every $u\in\{1,\dots,n\}$,
\begin{equation}\label{eq:locality-Fk}
  (P_n^k)_{uu} \;=\; F_k\big(\widehat B_k(G_n,u)\big).
\end{equation}
In particular, $u\mapsto (W_n^k)_{uu}$ is a bounded local functional of the rooted neighborhood of $u$.
\end{lemma}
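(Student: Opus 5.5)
We expand the $k$-th power of $P_n$ as a sum over closed walks and check that every term can be read off from the decorated ball. Writing $P_n(x,y)=(A_n)_{xy}/d_n(x)$, matrix multiplication gives
\[
(P_n^k)_{uu}=\sum_{x_1,\dots,x_{k-1}\in V(G_n)}\ \prod_{i=0}^{k-1}\frac{(A_n)_{x_ix_{i+1}}}{d_n(x_i)},\qquad x_0=x_k=u .
\]
Only sequences with $(A_n)_{x_ix_{i+1}}\ge1$ for all $i$, i.e.\ genuine closed walks of length $k$ in $G_n$ based at $u$, contribute. Each vertex $x_i$ of such a walk lies within graph distance $\min(i,k-i)\le \lfloor k/2\rfloor\le k$ of $u$. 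Hence every vertex and every edge used lies in $B_k(G_n,u)$. The edge $x_ix_{i+1}$ joins two vertices of the ball, so it belongs to the induced subgraph, and its multiplicity is recorded by mark~(1) of Definition~\ref{def:decorated-ball}. The denominators $d_n(x_i)$ are the full degrees, recorded by mark~(2).

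We therefore define $F_k$ on an abstract decorated rooted ball $(H,\rho,a,\mathrm{d})$ of radius $k$, where $a$ is the multiplicity function on pairs of vertices of $H$ and $\mathrm{d}$ is the degree mark, by
\[
F_k(H,\rho,a,\mathrm{d})\coloneqq \sum_{\substack{x_0=\rho,\,x_1,\dots,x_{k-1}\in V(H),\ x_k=\rho}}\ \prod_{i=0}^{k-1}\frac{a(x_i,x_{i+1})}{\mathrm{d}(x_i)} .
\]
If some degree mark is $0$ (which cannot happen for balls arising from $G_n$), we set the value to $0$.

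Three checks remain. First, $F_k$ is well defined on isomorphism classes. A root-preserving, multiplicity- and mark-preserving isomorphism $\phi$ maps closed walks at $\rho$ bijectively to closed walks at $\phi(\rho)$ and preserves each summand, so the sum is invariant. Second, identity \eqref{eq:locality-Fk} holds. Applied to $\widehat B_k(G_n,u)$, the sum defining $F_k$ ranges over walks inside the ball, and the first paragraph shows these are exactly the walks contributing to $(P_n^k)_{uu}$, with identical weights. Third, $F_k$ takes values in $[0,1]$. On realized balls the value equals a return probability of the simple random walk (Proposition~\ref{prop:rw-diagonal}), so it lies in $[0,1]$. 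For a general abstract ball we can restrict the domain to balls satisfying $\sum_y a(x,y)\le \mathrm{d}(x)$; then the weights define a sub-stochastic kernel and the value is again in $[0,1]$. Alternatively, we simply truncate $F_k$ to $[0,1]$, which does not change its values on realized balls.

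The "in particular" statement then follows from Proposition~\ref{prop:rw-diagonal}, since $(W_n^k)_{uu}=(P_n^k)_{uu}$.

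The only real subtlety is the locality claim itself: the denominators $d_n(x_i)$ involve edges leaving the ball, and these are not visible in the induced subgraph. The plan handles this through the full-degree vertex marks. One must also confirm that the walk never uses an edge outside the induced subgraph, which holds because both endpoints of every step lie within distance $k$ of $u$.
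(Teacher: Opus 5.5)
Your proposal is correct and follows the paper's proof. Both expand $(P_n^k)_{uu}$ as a sum over closed walks of length $k$ at $u$, note that every visited vertex lies in $B_k(G_n,u)$ so the numerators come from the multiplicity marks and the denominators from the full-degree marks, and define $F_k$ by that expression. Your extra checks (invariance under mark-preserving isomorphisms, and keeping values in $[0,1]$ on abstract balls that do not arise from $G_n$) fill in details the paper leaves implicit, and you handle them correctly.
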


\begin{proof}
Fix $k$ and a vertex $u$.
Expanding the matrix product, one can write
\begin{equation}\label{eq:path-expansion}
  (P_n^k)_{uu}
  = \sum_{u=v_0,v_1,\dots,v_{k-1},v_k=u}\ \prod_{t=0}^{k-1} P_n(v_t,v_{t+1}),
\end{equation}
where the sum is over all length-$k$ sequences $(v_0,\dots,v_k)$ of vertices.
If $P_n(v_t,v_{t+1})>0$, then $v_t$ and $v_{t+1}$ are adjacent, so $v_t$ lies at graph distance at most $t$ from $u$.
In particular, every vertex visited by such a path belongs to the ball $B_k(G_n,u)$.
Moreover, each factor $P_n(v_t,v_{t+1})$ equals $(A_n)_{v_t v_{t+1}}/d_n(v_t)$ and therefore depends only on
edge multiplicities between vertices in $B_k(G_n,u)$ and on the full degrees of vertices in the ball.
Hence the right-hand side of \eqref{eq:path-expansion} is a measurable function of the decorated ball
$\widehat B_k(G_n,u)$.
Defining $F_k$ by this expression yields \eqref{eq:locality-Fk}.
Finally, as a return probability, $(P_n^k)_{uu}\in[0,1]$.
\end{proof}

%%%%%%%%%%%%%%%%%%%%%%%%%%%%%%%%%%%%%%%%%%%%%%%%%%%%%%%%%%%%%%%%%%%%%%%%%%%%
\section{Local weak convergence and concentration of local averages}
\label{sec:LWC_concentration}

This section collects the probabilistic input needed to pass from local weak limits
to limits of empirical averages of bounded local functionals. We first recall the
local weak convergence of the Barab\'asi--Albert model towards the P\'olya--point
graph, in the sense of \emph{decorated} rooted balls (Definition~\ref{def:decorated-ball}).
We then prove a concentration (self-averaging) statement for empirical averages of
bounded decorated-local functionals. %As a consequence, two uniformly chosen roots
%are asymptotically independent.

\subsection{One-root local weak convergence for decorated balls}
\label{subsec:one_root_LWC}

Recall the decorated rooted balls $\widehat B_r(G_n,u)$ from Definition~\ref{def:decorated-ball}.
Let $U_n$ be a uniformly chosen vertex of $G_n$, independent of the graph.

\begin{lemma}[Degrees are readable one layer further]
\label{lem:degrees_readable_rplus1}
Fix $r\ge 0$. Let $(G,u)$ be a (multi)graph rooted at $u$, and let $H:=B_{r+1}(G,u)$ be the
(rooted) induced ball of radius $r+1$. Then for every vertex $x$ with $\dist_G(u,x)\le r$,
\[
d_G(x)=d_H(x).
\]
Consequently, the decorated ball $\widehat B_r(G,u)$ is a deterministic function of the
(rooted) ball $B_{r+1}(G,u)$.
\end{lemma}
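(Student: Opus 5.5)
The argument is direct: every edge incident to a vertex at distance at most $r$ from the root already lies inside the induced ball of radius $r+1$. I would first prove the degree identity and then read off the whole decoration of $\widehat B_r(G,u)$ from $H=B_{r+1}(G,u)$.

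\textbf{Step 1 (degree identity).} Fix $x$ with $\dist_G(u,x)\le r$. By definition, $d_G(x)=\sum_{y}A_{xy}$, where the sum runs over all $y$ with $A_{xy}>0$; there are no self-loops in the model, and if loops were allowed they would also stay inside the ball. Every such $y$ is adjacent to $x$, so the triangle inequality gives $\dist_G(u,y)\le \dist_G(u,x)+1\le r+1$. Hence $y\in V(H)$. Since $H$ is the \emph{induced} subgraph on its vertex set, all edges between $x$ and $y$ are kept with their multiplicities, so $A^H_{xy}=A_{xy}$. Summing over $y$ gives $d_H(x)=\sum_{y\in V(H)}A^H_{xy}=d_G(x)$.

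\textbf{Step 2 (the decorated ball is a function of $H$).} Graph distances from $u$ of length at most $r+1$ are realised by paths whose vertices all lie within distance $r+1$ of $u$. So for every vertex $v$ of $H$, $\dist_H(u,v)$ and $\dist_G(u,v)$ agree whenever either one is at most $r+1$. In particular, the vertex set $\{x:\dist_G(u,x)\le r\}$ equals $\{x\in V(H):\dist_H(u,x)\le r\}$, so $B_r(G,u)=B_r(H,u)$ as rooted induced multigraphs, with the same edge multiplicities. By Step 1, the vertex marks $d_G(x)$ for $x\in V(B_r(G,u))$ equal $d_H(x)$. Therefore $\widehat B_r(G,u)=\Phi(B_{r+1}(G,u))$, where $\Phi(H,u)$ is defined as the induced $r$-ball of $(H,u)$ marked with degrees computed in $H$.

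\textbf{Step 3 (well-definedness on isomorphism classes).} A root-preserving isomorphism of multigraphs preserves distances to the root, induced subgraphs, multiplicities and degrees. So $\Phi$ is constant on isomorphism classes and is a genuine map between spaces of rooted balls. This also makes it measurable, since the space of finite rooted balls is countable and discrete.

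There is no real obstacle here. The only point needing care is the distance bookkeeping in Step 2: one has to check that passing to $H$ does not shrink the $r$-ball. It does not, because shortest paths of length at most $r$ stay within distance $r$ of $u$, and so they lie inside $H$.
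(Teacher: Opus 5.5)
Your proof is correct and follows the paper's argument: every neighbor of $x$ lies within distance $r+1$ of $u$, so the induced ball $H$ keeps all edges at $x$ with their multiplicities, which gives $d_G(x)=d_H(x)$. Your Steps 2--3 (checking $B_r(H,u)=B_r(G,u)$, and invariance under root-preserving isomorphisms) spell out details that the paper leaves implicit in its final sentence.
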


\begin{proof}
Let $\dist_G(u,x)\le r$ and let $y$ be any neighbor of $x$ in $G$ (counting multiplicity).
Then $\dist_G(u,y)\le \dist_G(u,x)+1\le r+1$, hence $y\in V(H)$. Since $H$ is induced by
$V(H)=\{v:\dist_G(u,v)\le r+1\}$, every edge between $x$ and such a $y$ is present in $H$
(with the same multiplicity). Thus every edge incident to $x$ in $G$ is already present
in $H$, and $d_G(x)=d_H(x)$. The final claim follows because $\widehat B_r(G,u)$ consists
of the induced ball $B_r(G,u)$ together with the degree marks $d_G(x)$ for $\dist(u,x)\le r$,
which are given by $d_H(x)$.
\end{proof}

\begin{theorem}[One-root local weak convergence for decorated balls]
\label{thm:one_root_decorated_LWC}
Fix an integer $r\ge 0$. As $n\to\infty$,
\[
\widehat B_r(G_n,U_n)\ \Rightarrow\ \widehat B_r(G_\infty,o),
\]
where $(G_\infty,o)$ denotes the P\'olya--point graph.
\end{theorem}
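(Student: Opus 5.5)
The plan is to deduce the claim from the undecorated local weak convergence of Theorem~\ref{thm:lwc} by means of Lemma~\ref{lem:degrees_readable_rplus1} and the continuous mapping theorem. Fix $r\ge 0$ and apply Theorem~\ref{thm:lwc} with radius $r+1$. Convergence for the local topology means exactly that, for every fixed radius $R$, the law of the rooted ball $B_R(G_n,U_n)$ converges to that of $B_R(G_\infty,o)$. Here the state space is the countable set of isomorphism classes of finite rooted multigraphs with edge multiplicities. Since $G_\infty$ is locally finite almost surely, $B_{r+1}(G_\infty,o)$ is a.s.\ a finite rooted multigraph. Hence we get
\[
\PP\big(B_{r+1}(G_n,U_n)\cong H\big)\longrightarrow \PP\big(B_{r+1}(G_\infty,o)\cong H\big)
\]
for every finite rooted multigraph $H$.

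Next we define a deterministic map $\Phi_r$ on isomorphism classes of finite rooted multigraphs of radius at most $r+1$. It sends $(H,u)$ to the decorated ball consisting of $B_r(H,u)$ with its edge multiplicities and the vertex marks $d_H(x)$ for $\dist_H(u,x)\le r$. We must check that $\Phi_r$ is well defined on isomorphism classes. A root-preserving isomorphism of $H$ preserves graph distances, multiplicities and degrees, so it restricts to an isomorphism of decorated balls in the sense of Definition~\ref{def:decorated-ball}. Since the space is countable and discrete, $\Phi_r$ is automatically measurable and continuous. By Lemma~\ref{lem:degrees_readable_rplus1}, $\widehat B_r(G_n,U_n)=\Phi_r(B_{r+1}(G_n,U_n))$ for every $n$.

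The one point needing care is the analogous identity on the limit. For a locally finite infinite graph $G_\infty$ the proof of Lemma~\ref{lem:degrees_readable_rplus1} applies verbatim: every neighbor of a vertex at distance at most $r$ lies at distance at most $r+1$. Hence $d_\infty(x)=d_{B_{r+1}(G_\infty,o)}(x)$, and $\widehat B_r(G_\infty,o)=\Phi_r(B_{r+1}(G_\infty,o))$ almost surely. Here the degree $d_\infty$ is the one defined in Subsection~\ref{subsec:lwc-limit-operator}, which counts multiplicities, consistent with the convention that the local topology tracks edge multiplicities. The continuous mapping theorem (or simply summing the point probabilities over the fibres $\Phi_r^{-1}(\widehat H)$) then gives convergence of the law of $\widehat B_r(G_n,U_n)$ to that of $\widehat B_r(G_\infty,o)$. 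On a countable discrete space, pointwise convergence of probability mass functions to a probability mass function implies convergence in total variation by Scheffé's lemma, which is stronger than weak convergence.

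The main obstacle is not analytic but definitional. We need the local convergence of Theorem~\ref{thm:lwc} to be understood for multigraphs with multiplicities, which is how Berger--Borgs--Chayes--Saberi's Pólya--point graph encodes the $m$ out-edges. We also need the limiting degrees $d_\infty$ to be finite and equal to the multiplicity-counted degree. If one only had convergence of simple-graph balls, the multiplicities would have to be recovered from the Pólya--point construction directly, by reading the $m$ labelled out-edges of each vertex. I would handle this by invoking the marked version of the BBCS limit. The theorem is then a formal consequence of Lemma~\ref{lem:degrees_readable_rplus1}.
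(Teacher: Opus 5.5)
Your proposal is correct and follows the paper's proof: you apply the undecorated local convergence of Theorem~\ref{thm:lwc} at radius $r+1$, use Lemma~\ref{lem:degrees_readable_rplus1} to see that $\widehat B_r$ is a deterministic function of $B_{r+1}$, and conclude by the continuous mapping theorem. Your extra checks are points the paper leaves implicit rather than a different route: that $\Phi_r$ is well defined on isomorphism classes, that the identity also holds on the locally finite limit $G_\infty$, and that the local topology must retain edge multiplicities.
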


\begin{proof}
By \cite{BergerBorgsChayesSaberi2014},
the rooted graphs $(G_n,U_n)$ converge locally weakly to $(G_\infty,o)$ in the usual (non-decorated)
local topology. In particular, for every fixed $R\ge 0$,
\[
B_R(G_n,U_n)\ \Rightarrow\ B_R(G_\infty,o).
\]
By Lemma~\ref{lem:degrees_readable_rplus1}, $\widehat B_r(\cdot)$ is a deterministic function of
$B_{r+1}(\cdot)$. Therefore, applying the continuous mapping theorem to the convergence of
$B_{r+1}(G_n,U_n)$ yields the claimed convergence of $\widehat B_r(G_n,U_n)$.
\end{proof}

\subsection{Concentration of empirical averages of decorated-local functionals}
\label{subsec:concentration_local_averages}

For fixed $r\ge 0$, let $\mathcal X_r$ denote the space of isomorphism classes of decorated rooted
radius-$r$ balls. For a bounded function
$f:\mathcal X_r\to\R$, define the empirical average
\[
S_n(f)\;:=\;\frac1n\sum_{u=1}^n f\!\big(\widehat B_r(G_n,u)\big).
\]

\begin{proposition}[Self-averaging of bounded decorated-local averages]
\label{prop:self_averaging}
Fix $r\ge 0$ and let $f:\mathcal X_r\to\R$ be bounded. Then
\[
S_n(f)\;-\;\E\big[f(\widehat B_r(G_n,U_n))\big]\ \xrightarrow{\PP}\ 0.
\]
\end{proposition}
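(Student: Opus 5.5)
The plan is a Doob martingale along the PA filtration combined with a truncation, as announced in the roadmap. Write $\mathcal F_t:=\sigma(G_2,\dots,G_t)$ for $t\le n$. Note that $E[f(\widehat B_r(G_n,U_n))]=E[S_n(f)]$, so it suffices to show $\mathrm{Var}$-type concentration of $S_n(f)$ around its mean. Equivalently, the graph $G_n$ is a function of the independent choices $\xi_t:=(W_{t,1},\dots,W_{t,m})$, $t=3,\dots,n$. These are not independent as random variables, but we can realize them from independent uniforms $\eta_t\in[0,1]^m$ via the conditional quantile transform, and then consider $M_t:=E[S_n(f)\mid \eta_3,\dots,\eta_t]$. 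The Azuma--Hoeffding inequality gives $\PP(|S_n(f)-E S_n(f)|>\varepsilon)\le 2\exp(-\varepsilon^2/(2\sum_t c_t^2))$, provided we bound the martingale differences $|M_t-M_{t-1}|\le c_t$ with $\sum_t c_t^2=o(1)$, e.g.\ $c_t=O(n^{-1}\,\mathrm{polylog})$ or at least $O(n^{-1/2-\delta})$.

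The difficulty is that changing one step $\eta_t$ can change the whole subsequent evolution, and changing a single edge to a hub affects the $r$-balls of many vertices (hubs have degree $\asymp n^{1/2}$). So we truncate first. Fix $\Delta$ large and let $f_\Delta(\widehat B)=f(\widehat B)\mathbf 1\{\text{all degree marks in }\widehat B\le\Delta\}$. Then
\[
|S_n(f)-S_n(f_\Delta)|\le \|f\|_\infty\,\frac1n\#\{u:\ \widehat B_r(G_n,u)\text{ contains a vertex of degree}>\Delta\},
\]
whose expectation equals $\|f\|_\infty\PP(\widehat B_r(G_n,U_n)\text{ has a mark}>\Delta)$. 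By Theorem~\ref{thm:one_root_decorated_LWC} this converges to the corresponding probability for $(G_\infty,o)$, which tends to $0$ as $\Delta\to\infty$ since $G_\infty$ is locally finite. By Markov's inequality, the truncation error is therefore small in probability uniformly in large $n$, and likewise for the expectations. It thus remains to prove concentration of $S_n(f_\Delta)$ for each fixed $\Delta$.

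For $S_n(f_\Delta)$, I would use a coupling bound for the differences. Resampling $\eta_t$ alone changes the $m$ edges of vertex $t$. The subsequent PA dynamics driven by the same $\eta_{t+1},\dots$ can be coupled so that the degree sequences differ by a discrepancy that evolves like a Pólya urn perturbation: the expected number of later choices that differ is $O(\sum_{s>t} m\cdot\text{(discrepancy)}/s)$, which is $O((n/t)^{1/2})$ in expectation. Each altered edge changes $f_\Delta$ only at vertices whose $r$-ball meets its endpoints with all degrees $\le\Delta$. That is at most $C(\Delta,r)=2\Delta^{r+1}$ vertices per edge, except that a hub endpoint automatically kills the indicator, so only low-degree neighborhoods count. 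This gives $c_t\lesssim C(\Delta,r)\,n^{-1}(n/t)^{1/2}$, and $\sum_t c_t^2\lesssim C^2 n^{-1}\sum_t t^{-1}=O(C^2\log n/n)\to 0$. Since Azuma needs almost-sure bounds, I would instead apply the martingale-difference variance version, $E(S-ES)^2=\sum_t E(M_t-M_{t-1})^2$, with the expected-coupling bounds above; Chebyshev then suffices for convergence in probability.

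The main obstacle is this coupling step: controlling how a perturbation at time $t$ propagates through the preferential-attachment dynamics. Concretely, the number of subsequent discrepant edges must have a second moment of order $(n/t)$, and one has to check that the indicator truncation indeed caps the number of affected roots per discrepant edge by $C(\Delta,r)$. I would first attempt the maximal coupling of the conditional degree-proportional laws at each step, bounding the total-variation distance by $\|d-d'\|_1/(2m(s-2))$, and then run a Grönwall/urn argument for $E\|d_s-d'_s\|_1$.
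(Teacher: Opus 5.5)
Your proposal is correct. It shares the paper's skeleton: truncate at a degree level, run a Doob martingale along the growth, apply Azuma, and remove the truncation via Theorem~\ref{thm:one_root_decorated_LWC} and Markov's inequality. The key step, bounding the martingale increments, is genuinely different, and yours is the one that is actually justified.

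The paper bounds $|M_t^{(K)}-M_{t-1}^{(K)}|$ by $c_{r,m}(K)\|f\|_\infty/n$ by counting the roots affected when the time-$t$ update is changed with all later updates held fixed. This implicitly treats the updates as independent inputs, as in a McDiarmid-type argument. It ignores that the conditional law of every later step depends on the degrees altered at time $t$, which is exactly the propagation you single out. Your coupling supplies the missing estimate:
\begin{itemize}
\item Under step-by-step maximal coupling, $D_s=\|d_s-d_s'\|_1$ satisfies $\E[D_{s+1}\mid G_s,G_s']\le D_s\bigl(1+\frac{1}{2(s-1)}\bigr)$. Starting from $D_t\le 2m$, this gives $\E D_n\le Cm(n/t)^{1/2}$.
\item If $\widehat B_r(G,u)\neq\widehat B_r(G',u)$ and $f_\Delta(\widehat B_r(G,u))\neq 0$, then some endpoint of a discrepant edge lies in $B_r(G,u)$. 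It is joined to $u$ by a geodesic through vertices of degree at most $\Delta$, so each discrepant edge affects only $O(\Delta^r)$ roots.
\end{itemize}
Together these give $c_t\le C(\Delta,r,m)\|f\|_\infty n^{-1/2}t^{-1/2}$, hence $\sum_t c_t^2=O(\log n/n)$ rather than the paper's $O(1/n)$. That is still enough, both for fixed $\Delta$ followed by $\Delta\to\infty$ and for the paper's growing level $K(n)=\lfloor\log n\rfloor$.

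Three details of your sketch need adjusting.
\begin{itemize}
\item The constant in the total-variation bound matters. The exact distance between the two attachment laws is $\frac12\|d-d'\|_1$ divided by the total degree, half of what you wrote. With your bound the recursion only gives the factor $1+\frac{1}{s-1}$. That yields $\E D_n\asymp n/t$, $c_t\asymp 1/t$ and $\sum_t c_t^2=O(1)$, which does not close the argument. Any growth exponent strictly below $1$ would suffice, and the exact distance gives $1/2$.
\item Driving both futures by the same $\eta_s$ through the label-ordered quantile transform is a poor coupling. One unit of degree discrepancy shifts every intermediate quantile interval, so the disagreement probability can be of order $1/m$ instead of $O(D_s/s)$. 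You do not need it: $|M_t-M_{t-1}|\le\sup_{\xi'}\bigl|\E[S_n(f_\Delta)\mid G_{t-1},\xi_t]-\E[S_n(f_\Delta)\mid G_{t-1},\xi']\bigr|$, and each such difference may be estimated with any coupling of the two future laws. The plain filtration $\sigma(G_t)$ therefore suffices.
\item For the same reason your increment bound is deterministic. It only uses the first moment of the discrepancy count, uniformly over the starting pair. So Azuma applies directly and no second-moment bound on the discrepancies is needed, though the variance/Chebyshev route also works.
\end{itemize}
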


\begin{proof}
We prove concentration via a truncation + Doob martingale argument.

\smallskip
\noindent\emph{\bf Step 1: truncation.}
For $K\in\N$, define the truncated functional
\[
f^{(K)}(\widehat B_r(G,u))
\;:=\;
f(\widehat B_r(G,u))\,
\mathbf 1\Big\{\max_{x\in B_{r}(G,u)} d_G(x)\le K\Big\}.
\]
 Note that the truncation event
 is measurable with respect to $\widehat B_{r}(G,u)$ since $\widehat B_{r}(G,u)$ records the degree marks $\{d_G(x): x\in V(B_r(G,u))\}$. 
By Lemma~\ref{lem:degrees_readable_rplus1}, the decorated ball $\widehat B_r(G,u)$ is a deterministic function of $B_{r+1}(G,u)$,
so $f^{(K)}(\widehat B_r(G,u))$ is indeed a bounded $(r+1)$-local functional. 
Write $S_n^{(K)}:=S_n(f^{(K)})$.

\smallskip
\noindent\emph{\bf Step 2: bounded martingale increments for the truncated average.}
Let $(\mathcal F_t)_{t\ge 1}$ be the natural filtration of the PA construction ($\mathcal F_t=\sigma(G_t)$),
and define the Doob martingale
\[
M_t^{(K)}\;:=\;\E\!\left[S_n^{(K)}\mid \mathcal F_t\right],\qquad t=1,\dots,n,
\]
so that $M_n^{(K)}=S_n^{(K)}$.

Consider the update at time $t$ (adding vertex $t$ and its $m$ edges).
Only the degrees of the $m$ chosen endpoints and the adjacency relations involving $t$
change at time $t$. Hence, if for some root $u$ the value $f^{(K)}(\widehat B_r(G_n,u))$
changes when the update at time $t$ is modified, then the radius-$(r+1)$ ball around $u$ in $G_n$
must intersect $\{t\}\cup\{W_{t,1},\dots,W_{t,m}\}$, and moreover all degrees in $B_r(G_n,u)$  are $\le K$. Indeed, $f^{(K)}(\widehat B_r(G_n,u))\neq 0$ implies $\max_{x\in B_r(G_n,u)} d_{G_n}(x)\le K$ by definition.

In any graph whose degrees are bounded by $K$, the number of vertices within distance $(r+1)$
of a given vertex is at most
\[
N_{r+1}(K)\;:=\;1+K+K^2+\cdots+K^{r+1}\;\le\;\frac{K^{r+2}-1}{K-1}\,.
\]
Therefore, changing the update at time $t$ can affect the truncated values at \emph{at most}
\[
(m+1)\,N_{r+1}(K)
\]
roots $u$ (those within distance $r+1$ of the modified vertices, inside the region where degrees $\le K$).
Since $f$ is bounded, setting $\|f\|_\infty:=\sup |f|$, we obtain the deterministic Lipschitz bound
\[
\big|S_n^{(K)}(\text{after})-S_n^{(K)}(\text{before})\big|
\;\le\;
\frac{2\|f\|_\infty}{n}\,(m+1)\,N_{r+1}(K).
\]
It follows that the Doob increments satisfy
\[
|M_t^{(K)}-M_{t-1}^{(K)}|
\;\le\;
\frac{2\|f\|_\infty}{n}\,(m+1)\,N_{r+1}(K)
\;=:\;\frac{c_{r,m}(K)\,\|f\|_\infty}{n},
\]
where $c_{r,m}(K)=2(m+1)N_{r+1}(K)$.

By the Azuma--Hoeffding inequality for martingales,
for any $\varepsilon>0$,
\begin{equation}
\label{eq:azuma_truncated}
\PP\Big(|S_n^{(K)}-\E[S_n^{(K)}]|>\varepsilon\Big)
\;\le\;
2\exp\!\left(-\frac{\varepsilon^2\,n}{2\,c_{r,m}(K)^2\,\|f\|_\infty^2}\right).
\end{equation}
In particular, for any fixed $K$, $S_n^{(K)}-\E[S_n^{(K)}]\to 0$ in probability.

\smallskip
\noindent\emph{\bf Step 3: remove the truncation.}
Note that
\[
|S_n(f)-S_n^{(K)}|
\;\le\;
\frac{2\|f\|_\infty}{n}\sum_{u=1}^n
\mathbf 1\Big\{\max_{x\in B_{r}(G_n,u)} d_{G_n}(x)>K\Big\}.
\]
Taking expectations and using the uniform root $U_n$,
\[
\E\big[|S_n(f)-S_n^{(K)}|\big]
\;\le\;
2\|f\|_\infty\,
\PP\Big(\max_{x\in B_{r}(G_n,U_n)} d_{G_n}(x)>K\Big).
\]
By Theorem~\ref{thm:one_root_decorated_LWC} , the right-hand side converges to
$\PP(\max_{x\in B_r(G_\infty,o)} d_{G_\infty}(x)>K)$, which tends to $0$ as $K\to\infty$.

Now choose a sequence $K=K(n)\uparrow\infty$ slowly, e.g.\ $K(n)=\lfloor \log n\rfloor$.
Then \eqref{eq:azuma_truncated} implies $S_n^{(K(n))}-\E[S_n^{(K(n))}]\to 0$ in probability,
and the truncation error $|S_n(f)-S_n^{(K(n))}|$ converges to $0$ in probability by Markov's inequality.
Hence $S_n(f)- \E[f(\widehat B_r(G_n,U_n))]\to 0$ is  claimed.

\end{proof}

%%%%%%%%%%%%%%%%%%%%%%%%%%%%%%%%%%%%%%%%%%%
%------------------------------------Ancienne version-------------------------------------------------------
%%%%%%%%%%%%%%%%%%%%%%%%%%%%%%%%%%%%%%%%%%%%

\subsection{Law of large numbers for decorated-local statistics and application to traces}
\label{subsec:LLN_local_traces}

We now combine the one-root decorated local weak convergence with the self-averaging statement to obtain a law of large numbers for empirical averages of bounded decorated-local functionals. This will be applied to the normalized traces $\frac1n\Tr(W_n^k)$ appearing in the Neumann expansion of the resolvent.

\begin{corollary}[LLN for bounded decorated-local functionals]
\label{cor:LLN_local}
Fix $r\ge 0$ and let $\Phi:\mathcal X_r\to\R$ be a bounded measurable functional of the decorated
radius-$r$ ball (here $\mathcal X_r$ is the space of decorated rooted radius-$r$ balls).
Then, as $n\to\infty$,
\[
\frac1n\sum_{u=1}^n \Phi\!\big(\widehat B_r(G_n,u)\big)
\ \xrightarrow{\PP}\
\E\big[\Phi\big(\widehat B_r(G_\infty,o)\big)\big].
\]
\end{corollary}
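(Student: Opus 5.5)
The corollary follows from two ingredients already in place: Proposition~\ref{prop:self_averaging} (self-averaging around the annealed mean) and Theorem~\ref{thm:one_root_decorated_LWC} (convergence of that annealed mean). We split
\[
\frac1n\sum_{u=1}^n \Phi\big(\widehat B_r(G_n,u)\big)-\E\big[\Phi(\widehat B_r(G_\infty,o))\big]
= \Big(S_n(\Phi)-\E\big[\Phi(\widehat B_r(G_n,U_n))\big]\Big)
+\Big(\E\big[\Phi(\widehat B_r(G_n,U_n))\big]-\E\big[\Phi(\widehat B_r(G_\infty,o))\big]\Big),
\]
and show that the first bracket tends to $0$ in probability and the second, deterministic, bracket tends to $0$.

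\textbf{First bracket.} Since $\Phi$ is bounded, Proposition~\ref{prop:self_averaging} applies directly with $f=\Phi$. This gives $S_n(\Phi)-\E[\Phi(\widehat B_r(G_n,U_n))]\to0$ in probability.

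\textbf{Second bracket.} Here we use the decorated one-root convergence $\widehat B_r(G_n,U_n)\Rightarrow\widehat B_r(G_\infty,o)$ from Theorem~\ref{thm:one_root_decorated_LWC}. We need $\E\Phi(\widehat B_r(G_n,U_n))\to\E\Phi(\widehat B_r(G_\infty,o))$.

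The point to check is that $\Phi$ is continuous on $\mathcal X_r$. The space $\mathcal X_r$ of isomorphism classes of finite decorated rooted balls is countable. The local topology makes it discrete: two finite decorated radius-$r$ balls are at local distance zero only if they are isomorphic, so every point is isolated. Hence any function on $\mathcal X_r$, in particular the bounded measurable $\Phi$, is continuous.

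Weak convergence on a countable discrete space is the same as pointwise convergence of the probability mass functions. Combined with tightness, which holds because the limit is a probability measure, a Scheffé-type argument then gives total variation convergence. Therefore
\[
\big|\E\Phi(\widehat B_r(G_n,U_n))-\E\Phi(\widehat B_r(G_\infty,o))\big|\le 2\|\Phi\|_\infty\, d_{TV}\to0 .
\]
Alternatively, we can skip the total variation step and cite the bounded continuous test-function characterization of weak convergence directly.

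\textbf{Main obstacle.} The only step needing care is this justification that an arbitrary bounded measurable $\Phi$ is an admissible test function. It rests on $\widehat B_r(G_\infty,o)$ being a.s.\ a finite object, which holds because $G_\infty$ is locally finite, so the limit law lives on the countable set $\mathcal X_r$. With both brackets handled, adding them finishes the proof.
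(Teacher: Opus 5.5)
Your proposal is correct and follows the paper's proof: it uses the same split into a fluctuation term, handled by Proposition~\ref{prop:self_averaging} with $f=\Phi$, and a deterministic bias term, handled by Theorem~\ref{thm:one_root_decorated_LWC}. Your extra argument that every bounded $\Phi$ is an admissible test function spells out what the paper compresses into ``by boundedness of $\Phi$'': $\mathcal X_r$ is countable and discrete, the limit charges only finite balls since $G_\infty$ is locally finite, and Scheff\'e then gives total-variation convergence.
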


\begin{proof}
Apply Proposition~5.3 with $f=\Phi$ to get
\[
\frac1n\sum_{u=1}^n \Phi\big(\widehat B_r(G_n, u)\big)-\E\big[\Phi\big(\widehat B_r(G_n,U_n)\big)\big]\xrightarrow{\PP}0.
\]
By Theorem~5.2 and boundedness of $\Phi$,
$\E[\Phi(\widehat B_r(G_n,U_n))]\to \E[\Phi(\widehat B_r(G_\infty,o))]$.
Combine the two displays.
\end{proof}

\subsection{Application to traces of powers}\label{subsec:LLN-traces}

\begin{corollary}[Convergence of normalized traces of powers]\label{cor:trace-powers}
Fix an integer $k\ge 1$. Then
\begin{equation}\label{eq:trace-power-limit}
  \frac{1}{n}\mathrm{Tr}(W_n^k)\ \xrightarrow{\PP}\ \E\big[(W_\infty^k)_{oo}\big]
  \;=\;\E\big[(P_\infty^k)_{oo}\big],
\end{equation}
where $P_\infty=D_\infty^{-1}A_\infty$ is the random-walk kernel on $G_\infty$.
\end{corollary}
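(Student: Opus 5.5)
The plan is to write the normalized trace as an empirical average of a bounded decorated-local functional, and then invoke Corollary~\ref{cor:LLN_local} with $r=k$ and $\Phi=F_k$.

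First, by Proposition~\ref{prop:rw-diagonal} and Lemma~\ref{lem:locality},
\[
\frac1n\mathrm{Tr}(W_n^k)=\frac1n\sum_{u=1}^n (P_n^k)_{uu}=\frac1n\sum_{u=1}^n F_k\big(\widehat B_k(G_n,u)\big).
\]
Here $F_k$ takes values in $[0,1]$, so it is bounded. It is also measurable, because $\mathcal X_k$ is countable: decorated balls are finite objects with integer marks. Corollary~\ref{cor:LLN_local} then gives convergence in probability to $\E[F_k(\widehat B_k(G_\infty,o))]$.

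Second, I must identify this limit with $\E[(W_\infty^k)_{oo}]=\E[(P_\infty^k)_{oo}]$. Since $G_\infty$ is almost surely locally finite, $D_\infty$ is diagonal with finite positive entries. I will check that $d_\infty(x)\ge 1$; indeed $d_\infty(x)\ge m$, since every vertex of the P\'olya--point graph has at least $m$ neighbors. The identity $W_\infty=D_\infty^{1/2}P_\infty D_\infty^{-1/2}$ then holds entrywise. The matrix products defining $(W_\infty^k)_{oo}$ and $(P_\infty^k)_{oo}$ are finite sums over closed paths of length $k$ from $o$, by local finiteness. The path-expansion argument of Lemma~\ref{lem:locality}, which never used finiteness of the whole graph, therefore shows $(P_\infty^k)_{oo}=F_k(\widehat B_k(G_\infty,o))$. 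The same telescoping of the degree factors along a closed path gives $(W_\infty^k)_{oo}=(P_\infty^k)_{oo}$.

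One more consistency check is needed. The operator power $W_\infty^k$ on $\ell^2$ should have diagonal entry $\langle\delta_o,W_\infty^k\delta_o\rangle$ equal to the finite path sum. This holds because $W_\infty\delta_x$ is finitely supported for each $x$, so iterating stays within finitely supported vectors.

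The main obstacle, modest here, is making sure the decorated ball of $G_\infty$ carries the \emph{full} degrees $d_\infty(x)$, so that $F_k$ is evaluated consistently in the finite and limiting graphs. This is what Lemma~\ref{lem:degrees_readable_rplus1} and Theorem~\ref{thm:one_root_decorated_LWC} provide, since $\widehat B_k$ is read off from $B_{k+1}$ in both settings. Everything else is a direct substitution.
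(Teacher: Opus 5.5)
Your proposal is correct and follows the paper's proof. Like the paper, you rewrite $\frac1n\mathrm{Tr}(W_n^k)$ as $\frac1n\sum_u F_k(\widehat B_k(G_n,u))$ via Proposition~\ref{prop:rw-diagonal} and Lemma~\ref{lem:locality}, and then apply Corollary~\ref{cor:LLN_local} with $r=k$ and $\Phi=F_k$. Your additional step identifies $\E[F_k(\widehat B_k(G_\infty,o))]$ with $\E[(W_\infty^k)_{oo}]=\E[(P_\infty^k)_{oo}]$, using the finite closed-path expansion on the locally finite limit graph, and this makes explicit a point the paper leaves implicit.
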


\begin{proof}
By Lemma~\ref{lem:locality} there exists a bounded measurable $k$-local functional $F_k:X_k\to[0,1]$ such that
$(P_n^k)_{uu}=F_k(\widehat B_k(G_n,u))$ for all $u$.
Using $(W_n^k)_{uu}=(P_n^k)_{uu}=F_k(\widehat B_k(G_n,u))$ and $\Tr(W_n^k)=\sum_{u=1}^n (W_n^k)_{uu}$, we get
\[
\frac1n\Tr(W_n^k)=\frac1n\sum_{u=1}^n F_k\big(\widehat B_k(G_n,u)\big).
\]
Apply Corollary~\ref{cor:LLN_local} with $r=k$ and $\Phi=F_k$.
\end{proof}

\section{Convergence on the Neumann domain}\label{sec:neumann-domain}

In this section we combine the Neumann expansion of Section~\ref{sec:neumann} with the convergence of
local traces from Section~\ref{sec:LWC_concentration} to obtain convergence of the Stieltjes transform $m_n(z)$ on the
domain $\mathcal D=\{z\in\C_+:\ |1-z|>1\}$.

Recall $\mathcal D$ from \eqref{eq:neumann-domain} and the truncation identity
\eqref{eq:mn-trunc}--\eqref{eq:mn-trunc-error} in Corollary~\ref{cor:trace-reduction}.
For $z\in\mathcal D$ define
\begin{equation}\label{eq:m-series}
  m^{(K)}(z)\;\coloneqq\;\frac{1}{1-z}\sum_{k=0}^{K-1}\frac{1}{(1-z)^k}\,
  \E\big[(W_\infty^k)_{oo}\big],
  \qquad
  m_{\mathcal D}(z)\;\coloneqq\;\lim_{K\to\infty} m^{(K)}(z),
\end{equation}
where the limit exists by absolute convergence, since $|(W_\infty^k)_{oo}|\le \|W_\infty\|^k\le 1$ a.s.
and $\sum_{k\ge0}|1-z|^{-k}<\infty$ on $\mathcal D$.

\begin{proposition}[Convergence of $m_n(z)$ on $\mathcal D$]\label{prop:conv-neumann}
For every $z\in\mathcal D$,
\begin{equation}\label{eq:mn-conv-neumann}
  m_n(z)\ \xrightarrow[n\to\infty]{\PP}\ m_{\mathcal D}(z).
\end{equation}
Moreover, on $\mathcal D$ the limit coincides with the candidate transform defined in
\eqref{eq:limit-stieltjes}:
\begin{equation}\label{eq:mD-identification}
  m_{\mathcal D}(z)\;=\;\E\big[g_\infty(o;z)\big],
  \qquad z\in\mathcal D.
\end{equation}
\end{proposition}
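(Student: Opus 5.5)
The plan is a standard $\varepsilon/3$ argument: truncate the Neumann series at a fixed level $K$, pass to the limit term by term using Corollary~\ref{cor:trace-powers}, and let $K\to\infty$ using the uniform tail bound. The identification \eqref{eq:mD-identification} then follows by running the same Neumann expansion on $G_\infty$.

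\emph{Convergence.} Fix $z\in\mathcal D$ and $\varepsilon>0$. Write $\rho:=|1-z|>1$ and choose $K$ so large that $\rho^{-K}/(\rho-1)<\varepsilon/3$. By Corollary~\ref{cor:trace-reduction},
\[
|m_n(z)-m_{\mathcal D}(z)|\le |\varepsilon_{n,K}(z)| + \Big|\frac{1}{1-z}\sum_{k=0}^{K-1}(1-z)^{-k}\Big(\tfrac1n\mathrm{Tr}(W_n^k)-\E[(W_\infty^k)_{oo}]\Big)\Big| + |m^{(K)}(z)-m_{\mathcal D}(z)|.
\]
The first term is at most $\varepsilon/3$ deterministically, uniformly in $n$. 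The third term is the tail of the absolutely convergent series \eqref{eq:m-series}; since $|(W_\infty^k)_{oo}|\le1$, it is also at most $\rho^{-K}/(\rho-1)<\varepsilon/3$. The middle term is a finite sum. For $k=0$ the summand is identically $0$, because $\frac1n\mathrm{Tr}(I)=1=(W_\infty^0)_{oo}$. For $1\le k\le K-1$, Corollary~\ref{cor:trace-powers} gives convergence in probability to $0$, and a finite sum of terms converging to $0$ in probability also converges to $0$ in probability. Hence $\PP(|m_n(z)-m_{\mathcal D}(z)|>\varepsilon)\to0$.

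\emph{Identification.} Since $\|W_\infty\|\le1$ almost surely, Lemma~\ref{lem:neumann-general} applied on $\ell^2(V(G_\infty))$ with $T=W_\infty/(1-z)$ gives, almost surely,
\[
(L_\infty-zI)^{-1}=\frac{1}{1-z}\sum_{k\ge0}\Big(\frac{W_\infty}{1-z}\Big)^k
\]
in operator norm. Taking the $\langle\delta_o,\cdot\,\delta_o\rangle$ matrix element, which is continuous in operator norm, yields
\[
g_\infty(o;z)=\frac{1}{1-z}\sum_{k\ge0}(1-z)^{-k}(W_\infty^k)_{oo}.
\]
The partial sums are dominated by $\rho^{-1}\sum_k\rho^{-k}<\infty$, so dominated convergence lets us exchange expectation and summation, giving $\E[g_\infty(o;z)]=m_{\mathcal D}(z)$. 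The only measurability point needed is that $(W_\infty^k)_{oo}$ is a measurable function of the ball $B_{k+1}(G_\infty,o)$ via the path expansion of Lemma~\ref{lem:locality}, which is valid verbatim on locally finite graphs.

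\emph{Main obstacle.} I expect the substantive difficulty to be the one already absorbed into Corollary~\ref{cor:trace-powers}: the equality $\E[(W_\infty^k)_{oo}]=\E[(P_\infty^k)_{oo}]$ and the legitimacy of the limit. The uniform tail bound in $n$ is what makes the exchange of the limits $n\to\infty$ and $K\to\infty$ harmless here. One mild point to check is that Corollary~\ref{cor:trace-powers} indeed gives the limit $\E[F_k(\widehat B_k(G_\infty,o))]=\E[(W_\infty^k)_{oo}]$. This holds because the path-expansion formula defining $F_k$ is the same on $G_\infty$, and $G_\infty$ is locally finite with $d_\infty(x)\ge m>0$.
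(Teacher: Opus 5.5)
Your proposal is correct and follows essentially the same route as the paper. Both arguments truncate via Corollary~\ref{cor:trace-reduction}, pass to the limit termwise with Corollary~\ref{cor:trace-powers}, use the $n$-uniform tail bound to send $K\to\infty$, and identify the limit by expanding $(L_\infty-zI)^{-1}$ in a Neumann series and exchanging expectation and sum by dominated convergence. Your separate treatment of the $k=0$ term, which Corollary~\ref{cor:trace-powers} does not formally cover, and your explicit bound $|m^{(K)}(z)-m_{\mathcal D}(z)|\le \rho^{-K}/(\rho-1)$ are small refinements that the paper leaves implicit.
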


\begin{proof}
Fix $z\in\mathcal D$ and let $K\ge 1$.

\smallskip\noindent\textbf{Step 1: truncation of $m_n(z)$.}
By Corollary~\ref{cor:trace-reduction},
\[
  m_n(z)
  = \frac{1}{1-z}\sum_{k=0}^{K-1}\frac{1}{(1-z)^k}\,\frac{1}{n}\Tr(W_n^k)
  + \varepsilon_{n,K}(z),
\qquad
|\varepsilon_{n,K}(z)|\le \frac{|1-z|^{-K}}{|1-z|-1}.
\]
The error bound is deterministic and uniform in $n$.

\smallskip\noindent\textbf{Step 2: limit of the truncated sum as $n\to\infty$.}
For each fixed $k$, Corollary~\ref{cor:trace-powers} yields
$\frac1n\Tr(W_n^k)\xrightarrow{\PP}\E[(W_\infty^k)_{oo}]$.
Since $K$ is fixed, a finite linear combination preserves convergence in probability, hence
\begin{equation}\label{eq:truncated-conv}
  \frac{1}{1-z}\sum_{k=0}^{K-1}\frac{1}{(1-z)^k}\,\frac{1}{n}\Tr(W_n^k)
  \xrightarrow{\PP} m^{(K)}(z).
\end{equation}

\smallskip\noindent\textbf{Step 3: send $K\to\infty$ (tail control).}
Using \eqref{eq:mn-trunc-error} and the triangle inequality, for any $\eta>0$,
\begin{eqnarray*}
 \PP\big(|m_n(z)-m_{\mathcal D}(z)|>3\eta\big)
 \le  
&& \PP\big(|m_n(z)-m_n^{(K)}(z)|>\eta\big) 
+\PP\big(|m_n^{(K)}(z)-m^{(K)}(z)|>\eta\big)\\
&&+\mathbf{1}\big\{|m^{(K)}(z)-m_{\mathcal D}(z)|>\eta\big\},
\end{eqnarray*}
where $m_n^{(K)}(z)$ denotes the truncated sum in \eqref{eq:truncated-conv}.
The first term vanishes for large $K$ because $|m_n(z)-m_n^{(K)}(z)|\le \frac{|1-z|^{-K}}{|1-z|-1}$.
For such fixed $K$, the second term tends to $0$ as $n\to\infty$ by \eqref{eq:truncated-conv}.
Finally, the third term vanishes for large $K$ because $m^{(K)}(z)\to m_{\mathcal D}(z)$ by definition.
This proves \eqref{eq:mn-conv-neumann}.

\smallskip\noindent\textbf{Step 4: identification with $\E[g_\infty(o;z)]$ on $\mathcal D$.}
On the event that $G_\infty$ is locally finite (which holds a.s.), $W_\infty$ is a bounded self-adjoint
operator with $\|W_\infty\|\le 1$ and $L_\infty=I-W_\infty$.
For $z\in\mathcal D$ we factor
\[
  L_\infty-zI = (1-z)\Big(I-\frac{W_\infty}{1-z}\Big),
\qquad
\Big\|\frac{W_\infty}{1-z}\Big\|\le \frac{1}{|1-z|}<1.
\]
Therefore the Neumann series (Lemma~\ref{lem:neumann-general}) applies and yields, in operator norm,
\[
  (L_\infty-zI)^{-1}
  = \frac{1}{1-z}\sum_{k=0}^\infty \Big(\frac{W_\infty}{1-z}\Big)^k.
\]
Taking the $(o,o)$ matrix element gives
\[
  g_\infty(o;z)=\frac{1}{1-z}\sum_{k=0}^\infty \frac{(W_\infty^k)_{oo}}{(1-z)^k},
\qquad z\in\mathcal D,
\]
with an absolutely summable right-hand side since $|(W_\infty^k)_{oo}|\le 1$ a.s.
Hence, by dominated convergence/Fubini,
\[
  \E[g_\infty(o;z)]
  = \frac{1}{1-z}\sum_{k=0}^\infty \frac{\E[(W_\infty^k)_{oo}]}{(1-z)^k}
  = m_{\mathcal D}(z),
\]
which proves \eqref{eq:mD-identification}.
\end{proof}

\begin{remark}\label{rem:start-D}
Locality and local weak convergence only identify $\frac1n\Tr(W_n^k)$ for fixed $k$.
The Neumann expansion on $\mathcal D$ allows us to reduce $m_n(z)$ to finitely many such traces plus
a uniformly small tail, and then send the truncation level $K\to\infty$.
In Section~\ref{sec:extension} we extend the convergence from $\mathcal D$ to all $z\in\C_+$ by an analytic
continuation (normal family/Vitali) argument.
\end{remark}

\section{Extension to all $\C_+$ and convergence of the ESD}\label{sec:extension}

Section~\ref{sec:neumann-domain} establishes convergence of $m_n(z)$ on the nonempty open set
$\mathcal D=\{z\in\C_+:\ |1-z|>1\}$. In this section we extend the convergence to all of $\C_+$ using a
normal-family argument and then deduce weak convergence of the empirical spectral distribution.

\subsection{Normal family}\label{subsec:normal-family}

\begin{lemma}[Local boundedness]\label{lem:local-boundedness}
Let $\K\subset\C_+$ be compact and set $\eta_{\K}\coloneqq \inf\{\Im z:\ z\in \K\}>0$.
Then for all $n\ge 2$,
\begin{equation}\label{eq:uniform-bound-compact}
  \sup_{z\in \K} |m_n(z)| \le \eta_{\K}^{-1}.
\end{equation}
In particular, $(m_n)_{n\ge 2}$ is a locally bounded family of holomorphic functions on $\C_+$.
\end{lemma}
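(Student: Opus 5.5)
The bound \eqref{eq:uniform-bound-compact} follows directly from Lemma~\ref{lem:resolvent-bound}; holomorphy comes from Lemma~\ref{lem:herglotz}.

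First we check that $\eta_{\K}>0$. The map $z\mapsto \Im z$ is continuous and strictly positive on $\C_+$. On the compact set $\K\subset\C_+$ it therefore attains its infimum, and that minimum is a value $\Im z_0$ for some $z_0\in\K$, so it is strictly positive. (If $\K$ is empty, the statement is vacuous.)

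Next, for each $z\in\K$, Lemma~\ref{lem:resolvent-bound} gives $|m_n(z)|\le (\Im z)^{-1}$. Since $\Im z\ge \eta_{\K}$ for every $z\in\K$, this gives $|m_n(z)|\le \eta_{\K}^{-1}$. The right-hand side does not depend on $z$ or $n$, so taking the supremum over $z\in\K$ yields \eqref{eq:uniform-bound-compact} for all $n\ge 2$. No probabilistic input is needed, because the bound in Lemma~\ref{lem:resolvent-bound} holds deterministically for every realization of $G_n$.

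Finally, for the ``in particular'' part, each $m_n$ is holomorphic on $\C_+$ by Lemma~\ref{lem:herglotz}. Every point of $\C_+$ has a compact neighbourhood inside $\C_+$, for instance a closed disc of radius $\tfrac12\Im z$. On such a disc the bound just proved is uniform in $n$, which is exactly local boundedness of the family $(m_n)_{n\ge 2}$; Montel's theorem then makes this family normal. There is no real obstacle in this argument. The only point needing care is that the infimum of $\Im z$ over $\K$ is strictly positive, which is precisely where compactness of $\K$ and the inclusion $\K\subset\C_+$ are used.
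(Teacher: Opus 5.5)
Your proof is correct and matches the paper's argument: you combine the deterministic bound $|m_n(z)|\le(\Im z)^{-1}$ from Lemma~\ref{lem:resolvent-bound} with $\Im z\ge\eta_{\K}$ on $\K$, and you take holomorphy from the resolvent. Your extra check that $\eta_{\K}>0$ by compactness, and your explicit closed-disc neighbourhoods for local boundedness, are sound details that the paper leaves implicit.
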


\begin{proof}
Each $m_n$ is holomorphic on $\C_+$ and satisfies $|m_n(z)|\le (\Im z)^{-1}$ by Lemma~\ref{lem:resolvent-bound}.
Taking $z\in \K$ gives $|m_n(z)|\le \eta_{\K}^{-1}$, hence \eqref{eq:uniform-bound-compact}.
\end{proof}

\begin{remark}[Normal families]\label{rem:normal-families}
By Montel's theorem, any locally bounded family of holomorphic functions on a domain is relatively compact
for the topology of uniform convergence on compacts; see, e.g., \cite[Ch.~10]{ConwayComplexI}.
Lemma~\ref{lem:local-boundedness} therefore implies that $(m_n)$ is a normal family on $\C_+$.
\end{remark}

\subsection{Vitali extension from $\mathcal D$ to $\C_+$}\label{subsec:vitali}

Let $m(z)\coloneqq \E[g_\infty(o;z)]$, $z\in\C_+$, as in Section~\ref{subsec:lwc-limit-operator}.
By Proposition~\ref{prop:conv-neumann}, we already know that $m_n(z)\to m(z)$ in probability for every $z\in\mathcal D$,
and that $m_{\mathcal D}(z)=m(z)$ on $\mathcal D$.

\begin{proposition}[Extension to all $z\in\C_+$]\label{prop:vitali}
For every $z\in\C_+$,
\begin{equation}\label{eq:mn-conv-all}
  m_n(z)\ \xrightarrow[n\to\infty]{\PP}\ m(z).
\end{equation}
Moreover, for every compact $\K\subset\C_+$, the convergence holds uniformly on $\K$ in probability:
\begin{equation}\label{eq:mn-unif-compact}
  \sup_{z\in \K} |m_n(z)-m(z)|\ \xrightarrow[n\to\infty]{\PP}\ 0.
\end{equation}
\end{proposition}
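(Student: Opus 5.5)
The plan is to convert convergence in probability on $\mathcal D$ into almost-sure convergence along subsequences, run a deterministic Vitali argument on those subsequences, and return to convergence in probability via the subsequence criterion. We use that $m_n\to m$ in probability pointwise on $\mathcal D$ (Proposition~\ref{prop:conv-neumann}, with $m_{\mathcal D}=m$ there). We also use that $m$ is holomorphic on $\C_+$. Holomorphy follows because $z\mapsto g_\infty(o;z)$ is holomorphic with $|g_\infty(o;z)|\le(\Im z)^{-1}$. Hence $m=\E[g_\infty(o;\cdot)]$ is holomorphic by dominated convergence, or by Morera–Fubini on compacts, where the bound is uniform.

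First, fix a countable set $Q\subset\mathcal D$ with an accumulation point in $\mathcal D$, for instance the points of $\mathcal D$ with rational coordinates. Take any subsequence $(n_j)$. Pointwise convergence in probability at each $q\in Q$ and a diagonal extraction give a further subsequence $(n_{j_l})$ and an event $\Omega_0$ of probability one. On $\Omega_0$ we have $m_{n_{j_l}}(q)\to m(q)$ for every $q\in Q$ simultaneously.

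Second, fix $\omega\in\Omega_0$. By Lemma~\ref{lem:local-boundedness} and Remark~\ref{rem:normal-families}, the sequence $(m_{n_{j_l}}(\omega,\cdot))_l$ is locally uniformly bounded on the connected domain $\C_+$. By Montel's theorem every subsequence has a further subsequence converging locally uniformly to some holomorphic $h$. Any such $h$ agrees with $m$ on $Q$, which has an accumulation point in $\C_+$, so $h=m$ by the identity theorem. Since every subsequential limit equals $m$, the whole sequence $m_{n_{j_l}}(\omega,\cdot)$ converges to $m$ uniformly on each compact $\K\subset\C_+$; this is the Vitali–Porter theorem. In particular $\sup_{z\in\K}|m_{n_{j_l}}(z)-m(z)|\to0$ almost surely.

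Third, recall that $X_n\to0$ in probability if and only if every subsequence has a further subsequence along which $X_n\to0$ almost surely. Apply this to $X_n=\sup_{z\in\K}|m_n(z)-m(z)|$; this gives \eqref{eq:mn-unif-compact}. Taking $\K=\{z\}$ gives \eqref{eq:mn-conv-all}. A measurability remark is needed: $X_n$ is measurable because, by continuity, the supremum can be taken over a countable dense subset of $\K$.

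The main obstacle is the passage from pointwise convergence in probability to a statement about whole random functions, since Vitali's theorem is deterministic. The diagonal extraction over the countable set $Q$ combined with the subsequence criterion handles this cleanly. The one analytic input to check carefully is the holomorphy of $m$ on all of $\C_+$, not only on $\mathcal D$, which the uniform bound $(\Im z)^{-1}$ provides.
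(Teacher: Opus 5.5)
Your proposal is correct and follows essentially the same route as the paper: diagonal extraction on a countable subset of $\mathcal D$ to get almost-sure convergence along a subsubsequence, a pathwise Montel/Vitali argument with identification of the limit via the identity theorem, and the subsequence criterion for convergence in probability. You also make explicit two points the paper leaves implicit in its analytic-continuation step: that $m$ is holomorphic on all of $\C_+$ (from the uniform bound $(\Im z)^{-1}$ plus dominated convergence or Morera–Fubini), and that the supremum over $\K$ is measurable.
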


\begin{proof}
We use a subsequence principle together with a deterministic Vitali theorem.

\smallskip\noindent\textbf{Step 1: almost sure convergence on a countable set along a subsubsequence.}
Let $(n_j)$ be an arbitrary subsequence. Fix a countable dense set $\{z_q\}_{q\ge 1}\subset\mathcal D$.
By Proposition~\ref{prop:conv-neumann}, for each fixed $q$ we have $m_{n_j}(z_q)\to m(z_q)$ in probability.
Hence, for each $q$, there exists a further subsequence $(n_j^{(q)})$ along which
$m_{n_j^{(q)}}(z_q)\to m(z_q)$ almost surely.
By a diagonal extraction, we can find a subsubsequence $(n_{j_\ell})$ such that
\begin{equation}\label{eq:as-conv-countable}
  m_{n_{j_\ell}}(z_q)\ \longrightarrow\ m(z_q)\qquad\text{a.s.\ for every }q\ge 1.
\end{equation}

\smallskip\noindent\textbf{Step 2: apply Vitali's theorem pathwise.}
Fix $\omega$ in the probability-one event where \eqref{eq:as-conv-countable} holds.
For this $\omega$, the functions $z\mapsto m_{n_{j_\ell}}(z,\omega)$ are holomorphic on $\C_+$ and locally bounded
by Lemma~\ref{lem:local-boundedness}. Moreover, they converge pointwise on the set $\{z_q\}$, which has an
accumulation point in $\C_+$.
By Vitali's theorem (or Montel + uniqueness of analytic continuation), the sequence
$m_{n_{j_\ell}}(\cdot,\omega)$ converges uniformly on compacts in $\C_+$ to a holomorphic limit $h(\cdot,\omega)$;
see, e.g., \cite[Ch.~10]{ConwayComplexI}. By \eqref{eq:as-conv-countable} and continuity, $h(z,\omega)=m(z)$ for all
$z\in\mathcal D$, and hence by analytic continuation $h(z,\omega)=m(z)$ for all $z\in\C_+$.

Therefore, along the subsubsequence $(n_{j_\ell})$ we have
\begin{equation}\label{eq:as-unif-compact}
  \sup_{z\in K}|m_{n_{j_\ell}}(z)-m(z)|\ \longrightarrow\ 0
  \qquad\text{a.s.\ for every compact }K\subset\C_+.
\end{equation}

\smallskip\noindent\textbf{Step 3: conclude convergence in probability for the full sequence.}
We have shown that \emph{every} subsequence $(n_j)$ admits a subsubsequence $(n_{j_\ell})$ such that
\eqref{eq:as-unif-compact} holds. Since the limit $m$ is deterministic, this implies
\eqref{eq:mn-unif-compact} for the whole sequence (a standard subsequence principle).
In particular, \eqref{eq:mn-conv-all} holds for each fixed $z\in\C_+$.
\end{proof}

\subsection{From Stieltjes transforms to measures}\label{subsec:stieltjes-to-measure}

Let $\mu_n$ be the empirical spectral distribution of $L_n$ and recall $m_n(z)=\int (x-z)^{-1}\,\mu_n(dx)$,
$z\in\C_+$.

\begin{corollary}[Weak convergence of the ESD]\label{cor:esd}
There exists a deterministic probability measure $\mu$ on $\R$ such that, as $n\to\infty$,
\[
  \mu_n\ \xRightarrow{\PP}\ \mu.
\]
Its Stieltjes transform is $m(z)=\int_\R (x-z)^{-1}\,\mu(dx)$, $z\in\C_+$, and $m(z)=\E[g_\infty(o;z)]$.
\end{corollary}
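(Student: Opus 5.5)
The plan is to first build the candidate limit $\mu$ directly from the limiting object, and only then transfer the convergence of Stieltjes transforms to weak convergence in probability. For a.e.\ realization of $(G_\infty,o)$, the spectral theorem gives the root spectral measure $\mu_{\infty,o}$, supported on $[0,2]$. We then set
\[
\mu(B):=\E\big[\mu_{\infty,o}(B)\big],
\]
for Borel sets $B$. Measurability of $(G_\infty,o)\mapsto\mu_{\infty,o}(B)$ can be obtained in two ways: from measurability of $z\mapsto g_\infty(o;z)$ through the Stieltjes inversion formula, or from the moments $(W_\infty^k)_{oo}$, which are local functionals. Since $\mu_{\infty,o}$ has compact support, its moments determine it. This makes $\mu$ a deterministic probability measure on $[0,2]$. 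By Fubini, using the bound $|(\lambda-z)^{-1}|\le(\Im z)^{-1}$, we get $\int(x-z)^{-1}\mu(dx)=\E[g_\infty(o;z)]=m(z)$. So $m$ is the Stieltjes transform of $\mu$, which settles the identification claim.

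Next comes weak convergence in probability. All measures involved are supported on the fixed compact interval $[0,2]$, so it is enough to test against a countable convergence-determining class. A subsequence argument like the one in Proposition~\ref{prop:vitali} handles this. Take any subsequence. Proposition~\ref{prop:vitali} gives a further subsequence and a full-measure event on which $m_{n}(z)\to m(z)$ for all $z\in\C_+$; indeed the convergence is uniform on compacts. Fix $\omega$ in that event. The measures $\mu_n(\omega)$ are tight because they live on $[0,2]$. Any weak subsequential limit $\nu$ has Stieltjes transform equal to $\lim m_n(z)=m(z)$, because $x\mapsto (x-z)^{-1}$ is bounded and continuous on $[0,2]$. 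Uniqueness of a measure given its Stieltjes transform, via the inversion formula (\cite{BaiSilverstein2010}), forces $\nu=\mu$. Hence $\mu_{n}(\omega)\Rightarrow\mu$ along the subsubsequence, almost surely.

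Finally, we convert this to convergence in probability. Take a metric for weak convergence on probability measures on $[0,2]$, for instance the L\'evy or bounded-Lipschitz metric $d_{BL}$. The previous step shows that every subsequence of $d_{BL}(\mu_n,\mu)$ has a further subsequence converging to $0$ almost surely. This is equivalent to $d_{BL}(\mu_n,\mu)\to0$ in probability, i.e.\ $\mu_n\Rightarrow\mu$ weakly in probability. Equivalently, $\int f\,d\mu_n\to\int f\,d\mu$ in probability for every bounded continuous $f$.

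The only delicate step is the measurability and well-definedness of $\mu$ as a mixture of the $\mu_{\infty,o}$. I would handle it through moments: each $\int x^k\,\mu_{\infty,o}(dx)=((I-W_\infty)^k)_{oo}$ is a measurable local functional of $B_{k+1}(G_\infty,o)$. This moment description also gives an alternative route to the whole corollary. Moments of $\mu_n$ converge in probability to moments of $\mu$ by Corollary~\ref{cor:trace-powers}, and compact support makes the method of moments applicable. I would mention this as a cross-check.
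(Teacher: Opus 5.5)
Your proof is correct and follows the same route as the paper: Proposition~\ref{prop:vitali} combined with the Stieltjes continuity theorem, which you prove by hand via tightness on $[0,2]$, uniqueness of Stieltjes transforms and the subsequence criterion for convergence in probability. Your explicit construction $\mu=\E[\mu_{\infty,o}]$, with measurability obtained from the local moments $((I-W_\infty)^k)_{oo}$, is more careful than the paper's appeal to the Herglotz property alone (that property would additionally need the normalization $iy\,m(iy)\to-1$ as $y\to\infty$), and your method-of-moments cross-check via Corollary~\ref{cor:trace-powers} is also valid and would bypass the Neumann/Vitali machinery altogether, since every $\mu_n$ is supported on $[0,2]$.
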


\begin{proof}
The function $m$ is a Herglotz function on $\C_+$ (as an expectation of Green functions) and therefore is the
Stieltjes transform of a unique probability measure $\mu$ on $\R$ (see, e.g., \cite[Ch.~VII]{ReedSimon1980}).
By Proposition~\ref{prop:vitali}, $m_n(z)\to m(z)$ in probability for every $z\in\C_+$.
The Stieltjes continuity theorem then implies $\mu_n\Rightarrow \mu$ in probability; see, e.g.,
\cite[Thm.~2.4]{BaiSilverstein2010} 
\end{proof}

\begin{remark}[Support]\label{rem:support}
Since $\mathrm{Spec}(L_n)\subset[0,2]$ for every $n$, the measures $\mu_n$ are supported on $[0,2]$, and so is $\mu$.
\end{remark}

%%%%%%%%%%%%%%%%%%%%%%%%%%%%%%%%%%%%%%%%%%%%%%%%%%%%%%%%%%%%%%%%%%%
%%                                                               %%
%% You may add acknowledgments (optional).                       %%
%%                                                               %%
%%%%%%%%%%%%%%%%%%%%%%%%%%%%%%%%%%%%%%%%%%%%%%%%%%%%%%%%%%%%%%%%%%%
%\begin{acks}
%We are grateful to Martin Hairer who provided a nice \texttt{MR} macro and to S\'ebastien Gou\"ezel for his useful comments on the internals of the class file.
%\end{acks}

%%%%%%%%%%%%%%%%%%%%%%%%%%%%%%%%%%%%%%%%%%%%%%%%%%%%%%%%%%%%%%%%%%%
%%                                                               %%
%% You have reached the end of your document.                    %%
%%                                                               %%
%%%%%%%%%%%%%%%%%%%%%%%%%%%%%%%%%%%%%%%%%%%%%%%%%%%%%%%%%%%%%%%%%%%

\end{document}